\apptocmd{\sloppy}{\hbadness 10000\relax}{}{} 
\def\re{\operatorname{Re}}
\def\spin{\operatorname{spin}}
\def\det{\operatorname{det}}
\def\Tr{\operatorname{Tr}}
\newcommand{\A}{{\mathbb A}}
\newcommand{\Q}{{\mathbb Q}}
\newcommand{\Z}{{\mathbb Z}}
\newcommand{\R}{{\mathbb R}}
\newcommand{\C}{{\mathbb C}}
\newcommand{\GL}{{\rm GL}}
\newcommand{\SL}{{\rm SL}}
\newcommand{\SO}{{\rm SO}}
\newcommand{\Sp}{{\rm Sp}}
\newcommand{\GSp}{{\rm GSp}}
\newcommand{\PGSp}{{\rm PGSp}}
\newcommand{\forget}[1]{}
\def\qdots{\mathinner{\mkern1mu\raise0pt\vbox{\kern7pt\hbox{.}}\mkern2mu
\raise3.4pt\hbox{.}\mkern2mu\raise7pt\hbox{.}\mkern1mu}}
\newtheorem{lemma}{Lemma}[section]
\newtheorem{theorem}[lemma]{Theorem}
\newtheorem{corollary}[lemma]{Corollary}
\newtheorem{proposition}[lemma]{Proposition}
\newtheorem{remark}{Remark}
\newcommand\blfootnote[1]{%
	\begingroup
	\renewcommand\thefootnote{}\footnote{#1}%
	\addtocounter{footnote}{-1}%
	\endgroup
}
\newcommand\appendix@section[1]{%
	\refstepcounter{section}%
	\orig@section*{Appendix \@Alph\c@section: #1}%
	\addcontentsline{toc}{section}{Appendix \@Alph\c@section: #1}%
}
\g@addto@macro\appendix{\let\section\appendix@section}
\let\orig@section\section
\def\thickhline{%
  \noalign{\ifnum0=`}\fi\hrule \@height \thickarrayrulewidth \futurelet
   \reserved@a\@xthickhline}
\def\@xthickhline{\ifx\reserved@a\thickhline
               \vskip\doublerulesep
               \vskip-\thickarrayrulewidth
             \fi
      \ifnum0=`{\fi}}
\title{On distinguishing Siegel cusp forms of degree two}
\author{Zhining Wei and Shaoyun Yi}
\date{}
\begin{document}

\maketitle
\blfootnote{2020 Mathematics Subject Classification: Primary 11F46, 11F60, 11F66 \\ \hspace*{0.22in} Key words and phrases. Siegel modular forms; Hecke eigenforms; $L$-functions; Rankin-Selberg method.}
\begin{abstract}
In this work, we establish several results on distinguishing Siegel cusp forms of degree two. In particular, a Hecke eigenform of level one can be determined by its second Hecke eigenvalue under a certain assumption. Moreover, we can distinguish two Hecke eigenforms of level one by using $L$-functions.
\end{abstract}

\tableofcontents

\section{Introduction}\label{sect intro}
One of the fundamental problems in the theory of automorphic forms is \textit{whether we can distinguish them by a set of eigenvalues}. It is well known that in the elliptic modular forms case, this question is equivalent to asking how many Fourier coefficients are sufficient to determine a normalized eigenform. This question is answered first by
the classical result of Sturm \cite{Sturm1987}. In 2011, Ghitza \cite{Gh2011} obtains a result by considering two cuspidal Hecke eigenforms of distinct weights, which improves a result of Ram Murty \cite{RamMurty1997}. Later, Vilardi and Xue \cite{VilardiXue2018} give a much stronger result that two normalized eigenforms of full level can be determined by their second coefficients under the assumption of Maeda's conjecture for the Hecke operator $T(2)$. Recently, Xue and Zhu \cite{XueZhu2022} generalize this result in terms of their third coefficients under a similar assumption.

However, distinguishing Siegel cusp forms is a long-standing unanswered problem and only recently Schmidt \cite{Schmidt2018}, in a remarkable paper, gives an affirmative answer to this question for normalized eigenvalues of a Siegel cuspidal eigenform of degree two. This result has been improved by Kumar, Meher and Shankhadhar \cite{KumarMeherShankhadhar2021} in the full level case, in which they essentially show that any set of eigenvalues (normalized or non-normalized) at primes $p$ of positive upper density are sufficient to determine the Siegel cuspidal eigenform. In this work, we further investigate the question on distinguishing Siegel cusp forms of degree two from various aspects with several improved results. We point out that the similar question for paramodular forms has been studied in \cite{WangWeiYanYi2023SMO} via the combination of the methods from both of automorphic side and Galois side.

Let $\mathcal{S}_{k}(\Gamma_0(N))$ be the space of Siegel cusp forms of level $\Gamma_0(N)$ and weight $k$, where $\Gamma_0(N)$ is the Siegel congruence subgroup of level $N$ defined as in \eqref{Siegel congruence subgroup of level N}. Let $F\in \mathcal{S}_{k}(\Gamma_0(N))$ be a Hecke eigenform with eigenvalue $\lambda_F(n)$ for $(n, N)=1$. Then our first main result is as follows. 
\begin{theorem}\label{distinguish-weight}
Let $k_1,k_2$ be distinct positive integers. Let $F\in\mathcal{S}_{k_1}(\Gamma_0(N))$ and $G\in\mathcal{S}_{k_2}(\Gamma_0(N))$ be Hecke eigenforms. Then we can find $n$ satisfying 
\begin{equation}
n\leq (2\log N+2)^4
\end{equation}
such that $\lambda_F(n)\neq\lambda_G(n)$.
\end{theorem}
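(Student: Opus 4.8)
The plan is to reduce the problem, via an elementary estimate for the least prime not dividing $N$, to a \emph{single} small prime $p$ with $p\nmid N$, and then to exploit that the unramified spinor Euler factor of a degree-two Siegel form at $p$ has degree four: its bottom coefficient, the product of the four spinor Satake parameters at $p$, is forced by the similitude character to equal $p^{4k-6}$, and this value is determined by $\lambda_F(p),\lambda_F(p^2),\lambda_F(p^3),\lambda_F(p^4)$ alone. Agreement of these four eigenvalues for $F$ and $G$ would therefore give $p^{4k_1-6}=p^{4k_2-6}$, hence $k_1=k_2$, contrary to hypothesis. (This also explains the shape of the bound: the exponent $4$ is the degree of the spinor factor, and $2\log N+2$ is an admissible upper bound for the smallest prime coprime to $N$.)

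I would argue by contradiction, assuming $\lambda_F(n)=\lambda_G(n)$ for every $n\le(2\log N+2)^4$ with $(n,N)=1$. The first step is the prime-counting input: if every prime $q\le 2\log N+2$ divided $N$, then $N\ge\prod_{q\le 2\log N+2}q=\exp\!\big(\theta(2\log N+2)\big)$, which is impossible since $\theta(x)>\tfrac{x}{2}-1$ for all $x\ge2$ (a Chebyshev-type bound, with the short initial range checked directly). Hence there is a prime $p\le 2\log N+2$ with $p\nmid N$; fixing such a $p$, we have $p^4\le(2\log N+2)^4$, so $\lambda_F(p^j)=\lambda_G(p^j)$ for $j=1,2,3,4$.

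The second step is the local computation at $p$. Since $p\nmid N$, write $\sum_{j\ge0}\lambda_F(p^j)t^j=\prod_{i=1}^4(1-\gamma_i^{(F)}t)^{-1}$ with spinor Satake parameters satisfying $\gamma_1^{(F)}\gamma_4^{(F)}=\gamma_2^{(F)}\gamma_3^{(F)}=p^{2k_1-3}$, so that $\gamma_1^{(F)}\gamma_2^{(F)}\gamma_3^{(F)}\gamma_4^{(F)}=p^{4k_1-6}$ (and likewise $p^{4k_2-6}$ for $G$; this holds for Saito--Kurokawa lifts too, whose spinor factor is again of degree four). Expanding $\big(\sum_j\lambda_F(p^j)t^j\big)\prod_i(1-\gamma_i^{(F)}t)=1$ and comparing the coefficients of $t,t^2,t^3,t^4$ expresses the elementary symmetric functions $e_1^{(F)},\dots,e_4^{(F)}$ of the $\gamma_i^{(F)}$ — in particular the product $e_4^{(F)}=p^{4k_1-6}$ — as fixed polynomials in $\lambda_F(p),\lambda_F(p^2),\lambda_F(p^3),\lambda_F(p^4)$ that do not depend on $F$. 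Since these four eigenvalues coincide with those of $G$, we obtain
\[p^{4k_1-6}=e_4^{(F)}=e_4^{(G)}=p^{4k_2-6},\]
and, as $p\ge2$, this forces $k_1=k_2$, a contradiction. Thus some $n\le(2\log N+2)^4$ with $(n,N)=1$ — indeed some $n\in\{p,p^2,p^3,p^4\}$ — satisfies $\lambda_F(n)\ne\lambda_G(n)$.

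The argument uses neither the Ramanujan bound nor any analytic continuation, so the only real friction is bookkeeping, which is where I expect the main care to be needed. Specifically, one must pin down the normalization of $\lambda_F$ so that $\sum_j\lambda_F(p^j)t^j$ is exactly the degree-four rational function above; if instead $\lambda_F(p),\lambda_F(p^2)$ denote eigenvalues of the classical Hecke operators at $p$, the Hecke series carries a degree-two numerator, and carrying it along would in general require comparing six eigenvalues rather than four, so obtaining the exponent $4$ precisely as stated hinges on the spinor-coefficient convention. The remaining ingredients — the Chebyshev estimate and the small-$N$ and $p=2$ boundary cases — are routine.
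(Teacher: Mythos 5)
Your reduction to a single prime $p\le 2\log N+2$ with $p\nmid N$ is fine and matches the paper (Lemma~\ref{useful lemm}). The problem is the local step. In this paper $\lambda_F(n)$ is the eigenvalue of the classical Hecke operator $T(n)$, and by \eqref{eigenspin} the Hecke series at $p$ is
\[
\sum_{j\ge 0}\lambda_F(p^j)p^{-js}=\bigl(1-p^{2k-4-2s}\bigr)L_p(s,F,\spin),
\]
so it is \emph{not} the degree-four rational function $\prod_i(1-\gamma_i t)^{-1}$: it carries the degree-two numerator you flag at the end, and that numerator's coefficient $-p^{2k-4}$ itself depends on the weight. Consequently the elementary symmetric functions of the spinor Satake parameters are polynomials in $\lambda_F(p),\dots,\lambda_F(p^4)$ \emph{and} $p^{2k-4}$, not in the eigenvalues alone; e.g.\ $e_2=\lambda_F(p)^2-\lambda_F(p^2)-p^{2k-4}$. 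So equality of the four eigenvalues does not yield $e_4^{(F)}=e_4^{(G)}$, and the clean conclusion $p^{4k_1-6}=p^{4k_2-6}$ does not follow. Your own fallback (``compare six eigenvalues'') would only reprove the known bound $(2\log N+2)^6$ of Ghitza--Sayer, not the exponent $4$ claimed in the theorem.

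What actually closes the gap is a case split that your argument omits. Writing out the $t^3$ and $t^4$ coefficient identities together with the symplectic constraints $e_3=p^{2k-3}e_1$, $e_4=p^{4k-6}$ gives \eqref{1.5} and \eqref{1.6}. If $\lambda_F(p)\ne 0$, then matching $\lambda(p^i)$ for $i\le 3$ in \eqref{1.5} forces $p^{2k_1-3}+p^{2k_1-4}=p^{2k_2-3}+p^{2k_2-4}$, hence $k_1=k_2$. But if $\lambda_F(p)=0$, the $p^3$ relation is vacuous and \eqref{1.6} only yields $\lambda_F(p^2)(p^{2k_1-4}-p^{2k_2-4})=p^{4k_1-6}-p^{4k_2-6}$, i.e.\ $\lambda_F(p^2)=p^{2k_1-2}+p^{2k_2-2}$; ruling this out requires the upper bound $|\lambda_F(p^2)|<p^{2k-2}+2p^{2k-4}$, which the paper derives from the nontrivial estimate $1\le|\alpha_p|<p^{1/2}$ on Satake parameters (Pitale--Schmidt). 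So your claim that no bound of Ramanujan type is needed is incorrect under the paper's normalization, and without that input the argument does not establish the stated result.
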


\begin{remark}
It is shown in \cite[Corollary~5.3]{GhSa2014} that there exists some $n$ satisfying $n\leq  (2\log N+2)^6$ such that $\lambda_F(n)\neq\lambda_G(n)$.
In particular, we obtain an improved bound for $n$ in Theorem~\ref{distinguish-weight}. 
\end{remark}

Next, we assume that $N=1$, and let $\Gamma_2=\Sp(4, \Z)$. It is well known that the space $\mathcal{S}_k(\Gamma_2)$ has a natural decomposition into orthogonal subspaces
\begin{equation}\label{type decomposition}
    \mathcal{S}_k(\Gamma_2)=\mathcal{S}_k^{\mathbf{(P)}}(\Gamma_2)\oplus \mathcal{S}_k^{\mathbf{(G)}}(\Gamma_2)
\end{equation}
with respect to the Petersson inner product. Here, $\mathcal{S}_k^{\mathbf{(P)}}(\Gamma_2)$ is the subspace of Saito-Kurokawa liftings, and $\mathcal{S}_k^{\mathbf{(G)}}(\Gamma_2)$ is the subspace of non-liftings. We refer the reader to \cite[\S~2.1]{Schmidt2018} for more details about this type decomposition. For our purpose, let
\begin{equation}\label{weak Maeda’s conjecture}
    \mathcal{K}^{\mathbf{(*)}}(2)\colonequals\{k\in\Z\colon \text{The characteristic polynomial
of } T_k(2) \text{ for } F\in\mathcal{S}_k^{\mathbf{(*)}}(\Gamma_2)  \text{ is irreducible}\},
\end{equation}
where $\mathcal{S}_k^{\mathbf{(*)}}(\Gamma_2)$ is the set of those $F\in \mathcal{S}_k(\Gamma_2)$ of type $\mathbf{(*)}\in\{\mathbf{(P)}, \mathbf{(G)}\}$ as in \eqref{type decomposition}. Then we can prove the following result.
\begin{theorem}\label{disti-maeda}
Let $k_1, k_2\in\mathcal{K}^{\mathbf{(P)}}(2)\cap \mathcal{K}^{\mathbf{(G)}}(2)$ be two even positive integers, where $k_1$ and $k_2$ may equal. Let $F\in\mathcal{S}_{k_1}(\Gamma_2)$ and $G\in\mathcal{S}_{k_2}(\Gamma_2)$ be Hecke eigenforms. If $\lambda_F(2)=\lambda_G(2)$, then $F=c\cdot G$ for some non-zero constant $c$.
\end{theorem}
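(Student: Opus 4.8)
The plan is to reduce the statement to a comparison of characteristic polynomials of the Hecke operator $T(2)$, to settle the case of equal weights and equal type using the Maeda-type hypothesis, and then to rule out every other possible coincidence by size estimates together with an explicit trace computation. Since $T(2)$ is self-adjoint for the Petersson product, it respects the orthogonal decomposition \eqref{type decomposition}, so the eigenforms $F$ and $G$ each lie entirely in one of the two summands in their respective weights. Write $P_{k}^{\mathbf{(P)}}$ (resp.\ $P_{k}^{\mathbf{(G)}}$) for the characteristic polynomial of $T(2)$ acting on $\mathcal{S}_k^{\mathbf{(P)}}(\Gamma_2)$ (resp.\ $\mathcal{S}_k^{\mathbf{(G)}}(\Gamma_2)$); the hypothesis $k_1,k_2\in\mathcal{K}^{\mathbf{(P)}}(2)\cap\mathcal{K}^{\mathbf{(G)}}(2)$ says precisely that each of the (at most) four polynomials $P_{k_1}^{\mathbf{(P)}},P_{k_1}^{\mathbf{(G)}},P_{k_2}^{\mathbf{(P)}},P_{k_2}^{\mathbf{(G)}}$ is irreducible over $\Q$, hence separable, and therefore equals the minimal polynomial over $\Q$ of the $T(2)$-eigenvalue of any eigenform lying in the corresponding space. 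Consequently, if $\lambda_F(2)=\lambda_G(2)$ then the characteristic polynomial of $T(2)$ on the summand containing $F$ coincides with the one on the summand containing $G$, because both are the minimal polynomial of the common algebraic number $\lambda_F(2)$. When that is literally the same summand --- i.e.\ $k_1=k_2$ and $F$, $G$ are of the same type --- separability forces the $\lambda_F(2)$-eigenspace to be one-dimensional, so $G$ is a scalar multiple of $F$; this is the heart of the argument.

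It remains to show that the characteristic polynomials of $T(2)$ on two genuinely different summands can never coincide. I would bring in three inputs. (i) By the Saito--Kurokawa correspondence, every $F\in\mathcal{S}_k^{\mathbf{(P)}}(\Gamma_2)$ is the lift of a normalized elliptic Hecke eigenform $f\in S_{2k-2}(\SL_2(\Z))$ with $\lambda_F(2)=a_f(2)+2^{k-1}+2^{k-2}$, so Deligne's bound $|a_f(2)|\le 2^{k-1/2}$ shows that the roots of $P_k^{\mathbf{(P)}}$ are real, positive, and lie in the window $2^{k-2}\bigl(3-2\sqrt{2},\,3+2\sqrt{2}\bigr)$; moreover $P_k^{\mathbf{(P)}}(x)=Q_{2k-2}\!\left(x-2^{k-1}-2^{k-2}\right)$, where $Q_w$ is the characteristic polynomial of $T(2)$ on $S_w(\SL_2(\Z))$. (ii) The Ramanujan conjecture for $\mathcal{S}_k^{\mathbf{(G)}}(\Gamma_2)$ (Weissauer) gives $|\lambda_G(2)|\le 4\cdot 2^{(2k-3)/2}$, so the roots of $P_k^{\mathbf{(G)}}$ all lie in $\bigl[-2^{k+1/2},\,2^{k+1/2}\bigr]$. (iii) The degree $\deg P_k^{\mathbf{(P)}}=\dim S_{2k-2}(\SL_2(\Z))$ grows linearly in $k$, while $\deg P_k^{\mathbf{(G)}}=\dim\mathcal{S}_k(\Gamma_2)-\dim S_{2k-2}(\SL_2(\Z))$ grows cubically and vanishes for $k<20$; in particular the hypothesis already forces $k_1,k_2\ge 20$ in any case where the $\mathbf{(G)}$-type actually occurs. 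Comparing degrees and root-windows (and, in the type-mixed cases, the positivity of the Saito--Kurokawa eigenvalues) confines an equality between two distinct summands' polynomials to a short explicit list of weight pairs $(k_1,k_2)$, which one then eliminates directly from tables of Hecke eigenvalues, or which are vacuous outright.

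The main obstacle is the cross-weight comparison within a single type --- most acutely $P_{k_1}^{\mathbf{(P)}}=P_{k_2}^{\mathbf{(P)}}$ with $k_1<k_2$ --- since for nearby even weights the windows $2^{k-2}\bigl(3-2\sqrt2,\,3+2\sqrt2\bigr)$ overlap and the sequence $\dim S_w(\SL_2(\Z))$ is not monotone, so degrees and archimedean sizes alone do not reduce things to finitely many cases. To close this I would pass to traces. The eigenvalue formula in (i) gives $\operatorname{Tr}\!\bigl(T(2)\mid\mathcal{S}_k^{\mathbf{(P)}}\bigr)=\operatorname{Tr}\!\bigl(T(2)\mid S_{2k-2}(\SL_2(\Z))\bigr)+3\cdot 2^{k-2}\dim S_{2k-2}(\SL_2(\Z))$, while the Eichler--Selberg trace formula yields an explicit estimate of the form $\bigl|\operatorname{Tr}(T(2)\mid S_w(\SL_2(\Z)))\bigr|\le C\cdot 2^{w/2-1}+1$ with $C$ slightly above $2$ (only the terms $t\in\{0,\pm1,\pm2\}$ and the hyperbolic term contribute). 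Equating degrees and traces in $P_{k_1}^{\mathbf{(P)}}=P_{k_2}^{\mathbf{(P)}}$ then yields $2^{k_2-k_1}=(3d+\varepsilon_1)/(3d+\varepsilon_2)$ with $d=\dim S_{2k_1-2}(\SL_2(\Z))$ and $|\varepsilon_i|<3$, which is impossible unless $d=1$ and $k_2=k_1+2$; enumerating the even $k$ with $\dim S_{2k-2}(\SL_2(\Z))=1$ leaves only $(k_1,k_2)\in\{(10,12),(12,14)\}$, handled by inspection (and already excluded when membership in $\mathcal{K}^{\mathbf{(G)}}(2)$ requires a nonzero space). The general-type cross-weight case, and the type-mixed case, are dispatched by the same bookkeeping using Hashimoto's explicit trace formula for $T(2)$ on $\mathcal{S}_k(\Gamma_2)$ in place of Eichler--Selberg. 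I expect the bulk of the technical effort to be exactly these trace estimates and the ensuing finite verifications; all of the conceptual content is carried by the reduction via irreducibility of the $P_k^{\mathbf{(P)}}$ and $P_k^{\mathbf{(G)}}$.
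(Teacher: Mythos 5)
Your reduction is the same as the paper's: irreducibility of the characteristic polynomial of $T(2)$ on each summand makes it the minimal polynomial of the eigenvalue, so $\lambda_F(2)=\lambda_G(2)$ forces the two summands' polynomials to coincide, and separability settles the equal-summand case; the work is then to rule out coincidences between distinct summands. Your treatment of the two hardest comparisons also tracks the paper closely in spirit: for cross-weight Saito--Kurokawa pairs with equal dimensions you equate traces using $\lambda_{F_f}(2)=2^{k-1}+2^{k-2}+\lambda_f(2)$, and for the mixed case you use the size separation when $k_1-k_2\geq 6$ together with positivity of the lifted eigenvalues. The paper implements the cross-weight $\mathbf{(P)}$ trace comparison $2$-adically (via the non-integrality of $2^{m+5-k_2}c_{k_2-1,l}$ from Vilardi--Xue) rather than with your archimedean Eichler--Selberg bound; both routes work, though yours leaves a small finite list (note $(10,14)$ also survives your $d=1$ reduction, not just $k_2=k_1+2$) that must be checked by hand.

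The one genuine soft spot is the cross-weight comparison within type $\mathbf{(G)}$. You assert it is ``dispatched by the same bookkeeping using Hashimoto's explicit trace formula,'' but the bookkeeping that works for type $\mathbf{(P)}$ relies on the trace having a dominant, exactly known term $3\cdot 2^{k-2}\,m_k^{\mathbf{(P)}}$ against which the remainder is provably smaller; the trace of $T(2)$ on $\mathcal{S}_k^{\mathbf{(G)}}(\Gamma_2)$ has no such clean dominant term, and the Ramanujan windows $[-2^{k+1/2},2^{k+1/2}]$ are nested across weights, so neither size nor an obvious trace asymptotic separates $P_{k_1}^{\mathbf{(G)}}$ from $P_{k_2}^{\mathbf{(G)}}$ when the degrees agree. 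The paper closes this differently: it shows via the dimension formula that $m_k^{\mathbf{(G)}}$ is strictly increasing in $k$ for $k\geq 40$, so equal degrees with $k_1\neq k_2$ can only occur among the finitely many weights $k\leq 38$, which are then eliminated by consulting tables of $\operatorname{Tr}T_k^{\mathbf{(G)}}(2)$. You should either prove the monotonicity of $m_k^{\mathbf{(G)}}$ (which reduces you to a finite check) or actually carry out the Hashimoto-based estimate; as written, that step is an unverified claim rather than an argument.
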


\begin{remark}
The set $\mathcal{K}^{\mathbf{(*)}}(2)$ defined as in \eqref{weak Maeda’s conjecture} is basically a weak version of the generalized Maeda’s conjecture for $\Gamma_2=\Sp(4, \Z)$. In fact, Maeda's conjecture for $\Gamma_1=\SL(2, \Z)$ would imply that $\mathcal{K}^{\mathbf{(P)}}(2)=\{k\colon \mbox{$k$ even and $k\geq 10$}\}$. Moreover, it is expected that the set $\mathcal{K}^{\mathbf{(G)}}(2)$ has the natural density of $1$. See \cite{HidaMaeda1997} for more discussions about the Maeda's conjecture for $\SL(2, \Z)$.

In this work, we focus on the Hecke operator $T(2)$. The Maeda's conjecture for $T(2)$ is verified numerically for elliptic cusp forms of level one and weight $k\leq 14,000$ by Ghitza and McAndrew; see \cite[Table~1]{GhitzaMcAndrew2012}. This implies that $\mathcal{K}^{\mathbf{(P)}}(2)$ contains even integers less than $7,000$. In his master thesis, McAndrew has verified the Madea's conjecture for Siegel cusp forms of level one and even weight $k$ with $k\in[28,110]$; see \cite[Theorem~5.22]{mcandrew2013maeda}.  
\end{remark}

In addition, we can also distinguish Hecke eigenforms in each type by using $L$-functions with different methods. First, recall that Saito-Kurokawa liftings of level one and weight $k\in 2\Z_{>0}$ can be obtained from elliptic cusp forms of level one and weight $2k-2$. More precisely, let $f\in\mathcal{S}_{2k-2}(\Gamma_1)$ be a Hecke eigenform, and let $\pi_f$ be the cuspidal automorphic  representation of $\GL(2, \A)$ associated to $f$. Here, $\A$ is the ring of adeles of $\Q$. Then the resulting Saito-Kurokawa lifting is in $\mathcal{S}_k(\Gamma_2)$, denoted by $F_f$. It is well known that $F_f$ is also a Hecke eigenform; see \cite{Kurokawa1978, Maass1979} for more details about the classical Saito-Kurokawa liftings. The normalized spinor $L$-function of $F_f$ and the normalized $L$-function of $f$ are connected by the following relation:
\begin{equation}
    L(s,\pi_{F_f},\rho_4)=\zeta(s+1/2)\zeta(s-1/2)L(s,\pi_f),
\end{equation}
where $\rho_4$ is the $4$-dimensional irreducible representation of $\Sp(4, \C)$, and $\pi_{F_f}$ is the cuspidal automorphic  representation of $\GSp(4, \A)$ corresponding to $F_f$. Let $\xi$ be a primitive Dirichlet character, and let $\chi$ be the corresponding Hecke character of $\Q^\times\backslash\A^\times$. Let $\sigma_1$ be the standard representation of the dual group $\GL(1, \C)=\C^\times$. Then we can define the twisted spinor $L$-function by 
\begin{equation}\label{twisted L-function for SK lift}
    L(s,\pi_{F_f}\times\chi,\rho_4\otimes\sigma_1)=L(s+1/2,\chi)L(s-1/2,\chi)L(s,\pi_f\times\chi).
\end{equation}
Using \cite[Theorem~B]{LR1997}, we can show the following result for Saito-Kurokawa liftings of full level.
\begin{proposition}\label{Detemination of SK}
Let $k_1,k_2$ be even positive integers and $f\in\mathcal{S}_{2k_1-2}(\Gamma_1), g\in\mathcal{S}_{2k_2-2}(\Gamma_1)$ be normalized Hecke eigenforms. Suppose that there exists a non-zero constant $c$ such that
\begin{equation}\label{main assumption for G type Thm 3}
L(1/2,\pi_{F_f}\times\chi_d,\rho_4\otimes\sigma_1)=c\cdot L(1/2,\pi_{F_g}\times\chi_d,\rho_4\otimes\sigma_1)
\end{equation}
for almost all quadratic Hecke characters $\chi_d$ of $\Q^\times\backslash\A^\times$, which are corresponding to primitive quadratic Dirichlet characters $\xi_d$ of conductor $d$. Then $k_1=k_2$ and $F_f=F_g$. 
\end{proposition}

Finally, we will distinguish Hecke eigenforms of type $\mathbf{(G)}$ (i.e., non-liftings) by using Rankin-Selberg $L$-functions under the Generalized Riemann Hypothesis. Let $k_1, k_2$ be even integers. Let $F\in\mathcal{S}_{k_1}^{\mathbf{(G)}}(\Gamma_2)$ and $G\in\mathcal{S}_{k_2}^{\mathbf{(G)}}(\Gamma_2)$ be Hecke eigenforms, and let $\pi_F$ (resp. $\pi_G$) be the cuspidal automorphic  representation of $\GSp(4, \A)$ corresponding to $F$ (resp. $G$). Then we can define the Rankin-Selberg $L$-function of $F$ and $G$, denoted by $L(s,\pi_F\times \pi_G,\rho_i\otimes\rho_j)$ with $i,j\in\{4,5\}$; see \cite[(271)]{PiSaSc2014}. Note that the $L$-functions here are actually the finite part of $L$-functions in \cite{PiSaSc2014}; see Appendix~\ref{Archimedean factors append} for the explicit forms of archimedean factors associated to these \texorpdfstring{$L$}{L}-functions. Moreover, $L(s,\pi_F\times \pi_G,\rho_i\otimes\rho_j)$ has a simple pole at $s=1$ if and only if $i=j, k_1=k_2$ and $F=c\cdot G$ for some non-zero constant $c$; for example see \cite[Theorem~5.2.3]{PiSaSc2014}. In this case, we will apply the method in \cite{GoHo1993} and Lemma~\ref{rkselemma} to obtain the following result:
\begin{theorem}\label{dis-non}
Let $k_1, k_2$ be even integers. Let $F\in\mathcal{S}_{k_1}^{\mathbf{(G)}}(\Gamma_2)$ and $G\in\mathcal{S}_{k_2}^{\mathbf{(G)}}(\Gamma_2)$ be Hecke eigenforms, and let $\pi_F$ (resp. $\pi_G$) be the cuspidal automorphic  representation of $\GSp(4, \A)$ corresponding to $F$ (resp. $G$). Suppose that $L(s,\pi_F\times \pi_G,\rho_4\otimes\rho_4)$ and $L(s,\pi_F\times \pi_F,\rho_4\otimes\rho_4)$ satisfy the Generalized Riemann Hypothesis. If $F$ is not a scalar multiplication of $G$, then there exists an integer 
\begin{equation}
n\ll(\log k_1k_2)^2(\log\log k_1k_2)^4
\end{equation}
such that $\tilde{\lambda}_F(n)\neq \tilde{\lambda}_G(n)$. Here, $\tilde{\lambda}_F(n)=n^{3/2-k_1}\lambda_F(n)$ (resp. $\tilde{\lambda}_G(n)=n^{3/2-k_2}\lambda_G(n)$) is the normalized Hecke eigenvalue for $F$ (resp. $G$).
\end{theorem}
\section*{Acknowledgements}
The authors would like to thank the referee for the careful revision and a number of insightful suggestions. The authors would also like to thank Wenzhi Luo, Kimball Martin, Ralf Schmidt, Biao Wang, Pan Yan and Liyang Yang for their helpful discussions and comments. The authors further thank Ariel Weiss for drawing our attention to the low weights case ($k_i=1, 2$) in Theorem~\ref{distinguish-weight} and Biplab Paul for forwarding us their paper \cite{GunKohnenPaul2021}. Shaoyun Yi is supported by the National Natural Science Foundation of China (No. 12301016) and the Fundamental Research Funds for the Central Universities (No. 20720230025).
\section{Preliminaries}
We consider the symplectic similitude group
\begin{equation}
\GSp(4) \coloneqq \{g\in\GL(4)\colon \:^tgJg=\mu(g)J,\:\mu(g)\in \GL(1)\},
\end{equation}
which is an algebraic $\Q$-group. Here, $J=\begin{bsmallmatrix} &&&1\\&&1&\\&-1&&\\-1&&&\end{bsmallmatrix}$. The function $\mu$ is called the multiplier homomorphism. The kernel of this function is the symplectic group $\Sp(4)$. Let $\mathrm{Z}$ be the center of $\GSp(4)$ and $\PGSp(4)=\GSp(4)/\mathrm{Z}$. When speaking about Siegel modular forms of degree two, it is more convenient to realize symplectic groups using the symplectic form $J=\begin{bsmallmatrix} 0&1_2\\-1_2&0\end{bsmallmatrix}$. The Siegel upper half plane of degree 2 is defined by 
\begin{equation}
\label{eq-H2}
\mathbb{H}_2\coloneqq\{Z\in\mathrm{Mat}_2(\C)\colon \,^tZ=Z, \mathrm{Im}(Z)>0\}.
\end{equation}
The group $\GSp(4, \mathbb{R})^+\coloneqq\{g\in\GSp(4, \R)\colon \mu(g)>0\}$ acts on $\mathbb{H}_2$ by
\begin{equation}
    g\langle Z\rangle\coloneqq (AZ+B)(CZ+D)^{-1}\quad \text{for } g=\begin{bsmallmatrix} A&B\\C&D\end{bsmallmatrix}\in \GSp(4, \mathbb{R})^+ \text{ and } Z\in\mathbb{H}_2.
\end{equation}
Let $\Gamma_2=\Sp(4, \Z)$. In general, for a positive integer $N$ we let 
\begin{equation}\label{Siegel congruence subgroup of level N}
    \Gamma_0(N)\coloneqq\left\{\begin{bsmallmatrix} A&B\\C&D\end{bsmallmatrix}\in\Sp(4, \Z)\colon C\equiv 0\pmod{N}\right\}
\end{equation}
be the Siegel congruence subgroup of level $N$. In particular, $\Gamma_2=\Gamma_0(1)$.

Let $\mathcal{M}_k(\Gamma_0(N))$ be the space of Siegel modular forms of weight $k$ with respect to $\Gamma_0(N)$, and let $\mathcal{S}_k(\Gamma_0(N))$ be the subspace of cusp forms. That is to say, for any function $F\in \mathcal{M}_k(\Gamma_0(N))$, it is a holomorphic $\C$-valued function on $\mathbb{H}_2$ satisfying $\big(F|_k\gamma\big)(Z)=F(Z)$ for all $\gamma\in\Gamma_0(N)$. Here, 
\begin{equation}
    \big(F|_k g\big)(Z)\coloneqq\mu(g)^kj(g, Z)^{-k}F(g\langle Z\rangle)\quad \text{for } g=\begin{bsmallmatrix} A&B\\C&D\end{bsmallmatrix}\in \GSp(4, \mathbb{R})^+ \text{ and } Z\in\mathbb{H}_2,
\end{equation}
where $j(g, Z)\coloneqq \det(CZ+D)$ is the automorphy factor. We remark that this operator differs from the classical one used in \cite{Andrianov1974} by a factor. We do so to make the center of $\GSp(4, \mathbb{R})^+$ act trivially.

Let $F\in\mathcal{S}_k(\Gamma_0(N))$ be a Hecke eigenform, i.e., it is an eigenvector for all the Hecke operators $T(n), (n, N)=1$. Denote by $\lambda_F(n)$ the eigenvalue of $F$ under $T(n)$ when $(n,N)=1$. For any prime $p\nmid N$, we let $\alpha_{p,0},\alpha_{p,1},\alpha_{p,2}$ be the classical Satake parameters of $F$ at $p$. It is well known that
\begin{equation}
    \alpha_{p,0}^2\alpha_{p,1}\alpha_{p,2}=p^{2k-3}.
\end{equation}
In particular, let $N=1$ and $F\in\mathcal{S}_k(\Gamma_2)$ be a Hecke eigenform, we can define the $L$-series
\begin{equation}\label{defn of Hs}
    H(s)=\sum_{n=1}^{\infty}\frac{\lambda_F(n)}{n^s}.
\end{equation}
This can be written as a Euler product 
\begin{equation}
    H(s)=\prod_pH_p(s)=\prod_p\left(1+\frac{\lambda_F(p)}{p^s}+\frac{\lambda_F(p^2)}{p^{2s}}+\cdots\right)
\end{equation}
provided $\re(s)>k$. Moreover, one can show that
\begin{equation}\label{eigenspin}
    H_p(s)=\big(1-p^{2k-4-2s}\big)L_p(s,F,\spin),
\end{equation}
where $L_p(s,F,\spin)$ is the local spinor $L$-factor of $F$ at $p$ and it can be given by
\begin{equation}\label{localspin}
  L_p(s,F,\spin)^{-1}=(1-\alpha_{p,0}p^{-s})(1-\alpha_{p,0}\alpha_{p,1}p^{-s})(1-\alpha_{p,0}\alpha_{p,2}p^{-s})(1-\alpha_{p,0}\alpha_{p,1}\alpha_{p,2}p^{-s}).
\end{equation}
On the other hand, by \cite[pp.~62, 69]{Andrianov1974} one can see that
\begin{equation}\label{eisp}
L_p(s,F,\spin)^{-1}=1-\lambda_F(p)p^{-s}+(\lambda_F(p)^2-\lambda_F(p^2)-p^{2k-4})p^{-2s}-\lambda_F(p)p^{2k-3-3s}+p^{4k-6-4s}.
\end{equation}
As a consequence, we can define the spinor $L$-function 
\begin{equation}
L(s,F,\spin)=\prod_{p}L_p(s,F,\spin).
\end{equation}
Let $\alpha_p=p^{3/2-k}\alpha_{p,0}$ and $\beta_p=\alpha_p\alpha_{p,1}$. By comparing \eqref{localspin} with \eqref{eisp}, we obtain (also see \cite[Proposition~4.1]{PiSch2009})
\begin{align}
\lambda_F(p)&=p^{k-3/2}(\alpha_p+\alpha_p^{-1}+\beta_p+\beta_p^{-1})\label{lambda p},\\
\lambda_F(p^2)&=p^{2k-3}\left((\alpha_p+\alpha_p^{-1})^2+(\alpha_p+\alpha_p^{-1})(\beta_p+\beta_p^{-1})+(\beta_p+\beta_p^{-1})^2-2-1/p\right).\label{lambda p2}    
\end{align}    

Let $\rho_4$ be the $4$-dimensional irreducible representation of $\Sp(4, \C)$. In fact, $\rho_4$ is the natural representation of $\Sp(4, \C)$ on $\C^4$, which is also called the spin representation. For later use, we would normalize the spinor $L$-functions such that they satisfy a functional equation relating $s$ and $1-s$. More precisely, the normalized spinor $L$-function $L(s,\pi_F,\rho_4)$ is defined as follows
\begin{equation}\label{nor-spin}
    L(s,\pi_F,\rho_4)=L(s+k-3/2,F,\spin)=\sum_{n=1}^{\infty}\frac{a_F(n)}{n^s}.
\end{equation}
Note that this is the finite part of the completed $L$-function of $\pi_F$, where $\pi_F$ is the cuspidal automorphic  representation of $\GSp(4, \A)$ associated to $F$. See \cite{AsgariSchmidt2001} and \cite[Section~4.2]{Schmidt2017} for more details about the connection between Siegel modular forms of degree two and automorphic representations of $\GSp(4, \A)$. Moreover, let $\tilde{\lambda}_F(n)=n^{3/2-k}\lambda_F(n)$ be the normalized eigenvalues. It follows from \eqref{defn of Hs} and \eqref{eigenspin} that
\begin{equation}\label{nor-ei-spin}
    \sum_{n=1}^{\infty}\frac{\tilde{\lambda}_F(n)}{n^s}=\zeta(2s+1)^{-1}L(s,\pi_F,\rho_4).
\end{equation}
Here, $\zeta$ is the Riemann zeta function. 

If $F\in\mathcal{S}_k(\Gamma_0(N))$ with level $N>1$, we still can define the partial spinor $L$-functions by Euler products for all primes $p$ not dividing $N$. In particular, the local factor at $p$ with $(p, N)=1$ is defined in the same way as above.

Similarly, let $\rho_5$ be the $5$-dimensional irreducible representation of $\Sp(4, \C)$. An explicit formula for the representation $\rho_5$ as a map $\Sp(4, \C)\to \SO(5, \C)$ is given in \cite[Appendix~A.7]{RobertsSchmidt2007}. The standard $L$-function associated to $F$ is defined as
\begin{equation}
    L(s,\pi_F,\rho_5)=\prod_p L_p(s,F,\mathrm{std})=\sum_{n=1}^{\infty}\frac{b_F(n)}{n^s},
\end{equation}
where
\begin{equation}\label{local standard L function}  
  L_p(s,F,\mathrm{std})^{-1}=(1-p^{-s})(1-\alpha_{p,1}p^{-s})(1-\alpha_{p,2}p^{-s})(1-\alpha_{p,1}^{-1}p^{-s})(1-\alpha_{p,2}^{-1}p^{-s}).
\end{equation}
Again, for $F\in\mathcal{S}_k(\Gamma_0(N))$ with level $N>1$, we can define the partial standard $L$-functions by Euler products for all primes $p$ not dividing $N$ in the same way.
\section{Proof of Theorem~\ref{distinguish-weight}}
First, by \eqref{eigenspin} and \eqref{eisp} we have
\begin{equation}
\lambda_F(p^3)=2\lambda_F(p)\lambda_F(p^2)-\lambda_F(p)^3+\lambda_F(p)(p^{2k-3}+p^{2k-4}),\label{1.5}
\end{equation}
and
\begin{equation}\label{1.6}
    \lambda_F( p^4)=-\lambda_F( p)^4+\lambda_F( p)^2\lambda_F( p^2)+\lambda_F( p^2)^2+\lambda_F( p)^2p^{2k-4}+\lambda_F( p^2)p^{2k-4}+2\lambda_F( p)^2 p^{2k-3}-p^{4k-6}.
\end{equation}
Then we can show the following result:
\begin{theorem}\label{1.1}
Let $k_1,k_2$ be distinct positive integers. Let $F\in\mathcal{S}_{k_1}(\Gamma_0(N))$ and $G\in\mathcal{S}_{k_2}(\Gamma_0(N))$ be Hecke eigenforms. Then for any prime $p$ not dividing $N$, we can find $i\in\{1,2,3,4\}$ such that 
\[\lambda_F(p^i)\neq\lambda_G(p^i).\]
\end{theorem}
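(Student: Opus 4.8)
The plan is to argue by contradiction. Suppose $\lambda_F(p^i)=\lambda_G(p^i)$ for all $i\in\{1,2,3,4\}$, and set $a\coloneqq\lambda_F(p)=\lambda_G(p)$ and $b\coloneqq\lambda_F(p^2)=\lambda_G(p^2)$. The idea is that the recursions \eqref{1.5} and \eqref{1.6} express $\lambda_F(p^3)$ and $\lambda_F(p^4)$ as polynomials in $\lambda_F(p),\lambda_F(p^2)$ whose coefficients involve the weight only through $p^{2k-3}$, $p^{2k-4}$ and $p^{4k-6}$; since $k_1\neq k_2$ and $p\ge2$ these three quantities take different values for $F$ and for $G$, so matching $\lambda_F(p^3)$ with $\lambda_G(p^3)$ and then $\lambda_F(p^4)$ with $\lambda_G(p^4)$ forces $a$ and $b$ into values that will be incompatible with the known size of the Satake parameters of $F$.

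First I would subtract the $i=3$ instance of \eqref{1.5} for $G$ from that for $F$. Using $\lambda_F(p)=\lambda_G(p)=a$, $\lambda_F(p^2)=\lambda_G(p^2)=b$ and $p^{2k-3}+p^{2k-4}=p^{2k-4}(p+1)$, every term independent of the weight cancels and one is left with
\[0=\lambda_F(p^3)-\lambda_G(p^3)=a(p+1)\bigl(p^{2k_1-4}-p^{2k_2-4}\bigr).\]
Since $p+1\neq0$ and $p^{2k_1-4}\neq p^{2k_2-4}$ (as $k_1\neq k_2$), this gives $a=\lambda_F(p)=\lambda_G(p)=0$. Now $a=0$ collapses \eqref{1.6} to $\lambda_F(p^4)=b^2+bp^{2k_1-4}-p^{4k_1-6}$, and likewise for $G$ with $k_2$; subtracting and using $p^{4k-6}=p^2\bigl(p^{2k-4}\bigr)^2$ yields
\[0=\lambda_F(p^4)-\lambda_G(p^4)=\bigl(p^{2k_1-4}-p^{2k_2-4}\bigr)\bigl(b-p^{2k_1-2}-p^{2k_2-2}\bigr),\]
hence $b=\lambda_F(p^2)=p^{2k_1-2}+p^{2k_2-2}$.

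It remains to rule out $\lambda_F(p)=0$ together with $\lambda_F(p^2)=p^{2k_1-2}+p^{2k_2-2}$. Because $\lambda_F(p)=0$, formula \eqref{lambda p} gives $\beta_p+\beta_p^{-1}=-(\alpha_p+\alpha_p^{-1})$, and then \eqref{lambda p2} reduces to $\lambda_F(p^2)=p^{2k_1-3}\bigl((\alpha_p+\alpha_p^{-1})^2-2-p^{-1}\bigr)$. Substituting $\lambda_F(p^2)=p^{2k_1-2}+p^{2k_2-2}$ gives
\[(\alpha_p+\alpha_p^{-1})^2=2+p^{-1}+p+p^{2k_2-2k_1+1}>\bigl(p^{1/2}+p^{-1/2}\bigr)^2,\]
the strict inequality because $p^{2k_2-2k_1+1}>0$. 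On the other hand, the classical bound $p^{-1/2}\le|\alpha_p|\le p^{1/2}$ on the normalized spinor Satake parameters of a degree-two Siegel Hecke eigenform at a good prime (known unconditionally: for level one from Weissauer's theorem in the general-type case and from the explicit Satake parameters of Saito--Kurokawa lifts otherwise) gives $|\alpha_p+\alpha_p^{-1}|\le p^{1/2}+p^{-1/2}$, so $\bigl|(\alpha_p+\alpha_p^{-1})^2\bigr|\le\bigl(p^{1/2}+p^{-1/2}\bigr)^2$, contradicting the displayed identity. This proves Theorem~\ref{1.1}; applying it to the least prime $p\nmid N$, which satisfies $p\le2\log N+2$, and taking $n=p^i$ then also yields Theorem~\ref{distinguish-weight}.

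The only ingredient beyond elementary manipulation of \eqref{1.5}, \eqref{1.6}, \eqref{lambda p} and \eqref{lambda p2} is the Satake-parameter bound used in the last step, and the step I expect to require the most care is precisely that final comparison: one must check that the forced value $b=p^{2k_1-2}+p^{2k_2-2}$ overshoots the bound for every prime $p$ and regardless of whether $k_1>k_2$ or $k_1<k_2$, which works because $(p^{1/2}+p^{-1/2})^2=p+2+p^{-1}$ is the exact extremal value and the extra term $p^{2k_2-2k_1+1}$ is strictly positive in all cases.
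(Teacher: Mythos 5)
Your proof is correct and follows essentially the same route as the paper: subtracting the recursions \eqref{1.5} and \eqref{1.6} forces $\lambda_F(p)=0$ and $\lambda_F(p^2)=p^{2k_1-2}+p^{2k_2-2}$, and this value is then ruled out by the Satake-parameter bound (the paper packages this last step as Lemma~\ref{1.4}, an a priori upper bound $|\lambda_F(p^2)|<p^{2k-2}+2p^{2k-4}$ when $\lambda_F(p)=0$, but the computation is the same as your direct substitution). One small caveat: since the theorem is for general level $\Gamma_0(N)$, the bound on $|\alpha_p|$ should be quoted from \cite[Theorem~3.2]{PiSch2009} (which gives $1\leq|\alpha_p|<p^{1/2}$ at good primes for arbitrary $N$) rather than justified only for level one.
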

To prove this theorem, we also need the following lemma.
\begin{lemma}\label{1.4}
Let $F\in\mathcal{S}_k(\Gamma_0(N))$ be a Hecke eigenform with $k\in\Z_{>0}$. Let $p\nmid N$ be a prime. If $\lambda_F(p)=0$, then
\begin{equation}
|\lambda_F(p^2)|\leq p^{2k-2}+2p^{2k-4}.
\end{equation}
\end{lemma}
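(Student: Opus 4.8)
The plan is to use the parametrization of the Hecke eigenvalues in terms of the normalized Satake parameters given in \eqref{lambda p} and \eqref{lambda p2}, together with the Ramanujan-type bounds on these parameters. Set $\alpha_p = p^{3/2-k}\alpha_{p,0}$ and $\beta_p = \alpha_p\alpha_{p,1}$ as in the preliminaries, and write $x = \alpha_p + \alpha_p^{-1}$, $y = \beta_p + \beta_p^{-1}$. From \eqref{lambda p}, the hypothesis $\lambda_F(p) = 0$ is exactly $x + y = 0$, i.e. $y = -x$. Substituting into \eqref{lambda p2} gives
\[
\lambda_F(p^2) = p^{2k-3}\bigl(x^2 - x^2 + x^2 - 2 - 1/p\bigr) = p^{2k-3}\bigl(x^2 - 2 - 1/p\bigr).
\]
So the whole problem reduces to bounding $|x^2 - 2 - 1/p|$, which in turn requires a bound on $x = \alpha_p + \alpha_p^{-1}$.

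The key input is a bound on the Satake parameters. For the full spinor $L$-function this is delicate (the Ramanujan conjecture for $\GSp(4)$ is known for non-lifts by Weissauer but fails for Saito--Kurokawa lifts), so I would instead invoke the Kim--Sarnak-style bound, or more simply the bound $|\alpha_p| \le p^{1/2}$ (equivalently $|\alpha_{p,0}| \le p^{k-1}$) coming from the known analytic properties of the spinor $L$-function recorded in \cite{Andrianov1974}: the Euler product converges for $\Re(s) > k$, forcing each factor in \eqref{localspin} to have roots bounded accordingly, hence $|\alpha_p|, |\beta_p| \le p^{1/2}$ after normalization. Under $\lambda_F(p) = 0$ we have $\beta_p + \beta_p^{-1} = -(\alpha_p + \alpha_p^{-1})$, and since $|\alpha_p| \le p^{1/2}$ we get $|x| = |\alpha_p + \alpha_p^{-1}| \le p^{1/2} + p^{-1/2}$. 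Therefore
\[
|x^2 - 2 - 1/p| \le x^2 + 2 + 1/p \le (p^{1/2} + p^{-1/2})^2 + 2 + 1/p = p + 2 + 1/p + 2 + 1/p = p + 4 + 2/p.
\]
Multiplying by $p^{2k-3}$ yields $|\lambda_F(p^2)| \le p^{2k-2} + 4p^{2k-3} + 2p^{2k-4}$, which is a bit weaker than claimed; to reach the stated bound $p^{2k-2} + 2p^{2k-4}$ one must be more careful — note $x^2 \le p + 2 + 1/p$ forces $x^2 - 2 - 1/p \le p$, while on the other side $x^2 - 2 - 1/p \ge -2 - 1/p \ge -(2 + 1/p)$, and crudely $2 + 1/p \le 2p^{2-2k}\cdot p^{2k-2}\cdot\ldots$; in fact since $x^2 \ge 0$ one has $-(2+1/p) \le x^2 - 2 - 1/p \le p$, so $|x^2 - 2 - 1/p| \le \max(p, 2+1/p) = p$ for $p \ge 2$, giving $|\lambda_F(p^2)| \le p^{2k-2}$, which is even stronger. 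The discrepancy suggests the intended argument allows $|\alpha_p|$ to be as large as roughly $p$ (using only $|\alpha_{p,0}\alpha_{p,1}\alpha_{p,2}| $-type constraints or a weaker convexity bound rather than the clean Ramanujan bound), so I would instead assume only $|\alpha_p| \le p$, giving $|x| \le p + 1$ hence $x^2 \le p^2 + 2p + 1$, and then bound $|\lambda_F(p^2)| = p^{2k-3}|x^2 - 2 - 1/p| < p^{2k-3}(p^2 + 2p) = p^{2k-1} + 2p^{2k-2}$ — still not matching.

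The main obstacle, and the step to get right, is therefore the precise Satake bound that is actually available and invoked here: the target $|\lambda_F(p^2)| < p^{2k-2} + 2p^{2k-4}$ corresponds to $|x^2 - 2 - 1/p| < p + 2/p$, i.e. essentially $x^2 < p + 2 + 3/p$, i.e. $|x| < p^{1/2}(1 + O(1/p))$ — that is exactly the Ramanujan bound $|\alpha_p| \le p^{1/2}$ with a small amount of slack. So the clean proof is: cite the Ramanujan bound $|\alpha_p| = |p^{3/2-k}\alpha_{p,0}| \le p^{1/2}$ (valid here because one may pass to the non-lift case, or because the needed inequality is strict and the boundary Saito--Kurokawa case is handled by the strict inequality), deduce $x^2 = (\alpha_p + \alpha_p^{-1})^2 \le (p^{1/2}+p^{-1/2})^2 = p + 2 + 1/p$, hence $x^2 - 2 - 1/p \le p$ and $x^2 - 2 - 1/p \ge -2-1/p > -(p + 2/p)$ when $p \geq 2$ (indeed $2 + 1/p \le p + 2/p$ iff $1/p \le p$, true), so $|x^2 - 2 - 1/p| \le p < p + 2/p$, and multiply through by $p^{2k-3}$. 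I would present it in this order: (1) rewrite $\lambda_F(p)=0$ and $\lambda_F(p^2)$ via \eqref{lambda p}, \eqref{lambda p2} to reduce to $\lambda_F(p^2) = p^{2k-3}(x^2 - 2 - 1/p)$; (2) invoke the Satake/Ramanujan bound on $\alpha_p$; (3) conclude the elementary inequality $|x^2 - 2 - 1/p| < p + 2/p$ and scale.
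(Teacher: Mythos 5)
Your reduction is exactly the paper's: writing $x=\alpha_p+\alpha_p^{-1}$, the hypothesis $\lambda_F(p)=0$ and \eqref{lambda p} force $\beta_p+\beta_p^{-1}=-x$, and \eqref{lambda p2} then gives $\lambda_F(p^2)=p^{2k-3}(x^2-2-1/p)=p^{2k-3}(\alpha_p^2+\alpha_p^{-2}-1/p)$, so everything hinges on a two\-/sided control of $\alpha_p$. The genuine gap is in how you source that control. Absolute convergence of the Euler product for $\Re(s)>k$ only forces the reciprocal roots of \eqref{localspin} to satisfy $|\alpha_{p,0}|\le p^{k}$, i.e.\ $|\alpha_p|\le p^{3/2}$ --- nowhere near $p^{1/2}$ --- so ``the Euler product converges, hence the roots are bounded accordingly'' does not deliver the input you need; and your detour through Weissauer/Kim--Sarnak is a red herring, since what is required is not temperedness but the Ramanujan-type classification of Pitale--Schmidt: $\alpha_p$ is precisely the $\sigma(p)$ of \cite[Theorem~3.2]{PiSch2009} and satisfies $1\le|\alpha_p|<p^{1/2}$. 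That citation is the entire analytic content of the paper's proof, and without it (or an equivalent) your argument does not close.

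Two further steps you use silently are also supplied by that same classification. First, your inequality $|x|\le p^{1/2}+p^{-1/2}$ does not follow from the upper bound $|\alpha_p|\le p^{1/2}$ alone, because $|\alpha_p^{-1}|=1/|\alpha_p|$ is then unbounded; you need the lower bound $|\alpha_p|\ge 1$ (monotonicity of $t+1/t$ on $[1,p^{1/2})$ then gives $|x|<p^{1/2}+p^{-1/2}$). Second, your lower bound $x^2-2-1/p\ge -2-1/p$ assumes $x^2\ge 0$, i.e.\ that $x$ is real; for an arbitrary unitary quadruple of Satake parameters with $x+y=0$ one could have $x$ purely imaginary (namely $\beta_p=\bar\alpha_p^{-1}$), in which case $x^2<0$ and your estimate degrades to the weaker $p^{2k-2}+4p^{2k-3}+2p^{2k-4}$ from your first attempt. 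The classification rules this configuration out ($\sigma(p)$ is real or of modulus one). Once you replace your justification by the citation to \cite[Theorem~3.2]{PiSch2009}, your computation is correct and coincides with the paper's proof, yielding $|x^2-2-1/p|\le\max(p,\,2+1/p)<p+2/p$ and hence the stated bound.
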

\begin{proof}
By \eqref{lambda p}-\eqref{lambda p2} and $\lambda_F(p)=0$, we have
\begin{align*}
\lambda_F(p^2)&=\lambda_F(p)^2-p^{2k-3}((\alpha_p+\alpha_p^{-1})(\beta_p+\beta_p^{-1})+2+1/p)\\
&=p^{2k-3}((\alpha_p+\alpha_p^{-1})(\alpha_p+\alpha_p^{-1})-2-1/p)\\
&=p^{2k-3}(\alpha_p^2+\alpha_p^{-2}-1/p)=p^{2k-3}(\beta_p^2+\beta_p^{-2}-1/p).
\end{align*}
Hence the desired assertion follows immediately from the bound of $\alpha_p, \beta_p$ as below:
\begin{equation}\label{Satake parameter estimate}
    p^{-1/2}\leq |\alpha_p|, |\beta_p|\leq p^{1/2}.
\end{equation}
To see \eqref{Satake parameter estimate}, we separate two cases: i) If $F$ is not of type \textbf{(G)}, then \eqref{Satake parameter estimate} follows from \cite[Table~1]{Schmidt2018} (note that the type \textbf{(F)} cannot occur) and the Ramanujan conjecture for elliptic modular forms; ii) If $F$ is of type \textbf{(G)}, then \eqref{Satake parameter estimate} follows from the fact that the cuspidal automorphic representation $\pi_F$ of $\PGSp(4, \A)$ associated to $F$ admits a functorial transfer to a unitary, cuspidal automorphic representation of $\GL(4,\A)$ and the Jacquet-Shalika bound for $\GL(4)$ (see \cite[Corollary~2.5]{JacquetShalika1981}).
\end{proof}

\begin{proof}[Proof of Theorem~\ref{1.1}]
Assume that there exists a prime $p\nmid N$ such that $\lambda_F(p^{i})=\lambda_G(p^i)$ for $i=1,2,3,4$; we will obtain a contradiction. More precisely, we consider the following two cases:

(1) If $\lambda_F(p)\neq 0$, then by \eqref{1.5} and $\lambda_F(p^i)=\lambda_G(p^i)$ ($i=1,2,3$), we have
\begin{equation}
\lambda_F(p)(p^{2k_1-3}+p^{2k_1-4})=\lambda_G(p)(p^{2k_2-3}+p^{2k_2-4}).
\end{equation}
This yields the contradiction $k_1=k_2$.

(2) If $\lambda_F(p)=0$, then $\lambda_G(p)=0$ by assumption. By Lemma~\ref{1.4} we have
\begin{equation}
|\lambda_F(p^2)|\leq p^{2k_1-2}+2p^{2k_1-4}\quad\text{and}\quad |\lambda_G(p^2)|\leq  p^{2k_2-2}+2p^{2k_2-4}.
\end{equation}
Without loss of generality, we assume that $k_1\geq k_2+1$. Since $\lambda_F( p^2)= \lambda_G( p^2)$, we have
\begin{equation}\label{est lambda p 2 equal}
    |\lambda_F( p^2)|=|\lambda_G( p^2)|\leq p^{2k_2-2}+2p^{2k_2-4}.
\end{equation}
On the other hand, it follows from \eqref{1.6} and $\lambda_F(p^i)=\lambda_G(p^i), i=1,2,3,4$, that
\begin{equation}\label{key eqn to get contradiction}
    \lambda_F( p^2)p^{2k_1-4}-p^{4k_1-6}=\lambda_G( p^2)p^{2k_2-4}-p^{4k_2-6}.
\end{equation}
Then we have $\lambda_F( p^2)(p^{2k_1-4}-p^{2k_2-4})=p ^{4k_1-6}-p ^{4k_2-6}$. Multiplying $p^2$ both sides we obtain
\begin{equation}
    \lambda_F( p^2)(p^{2k_1-2}-p^{2k_2-2})=p ^{4k_1-4}-p ^{4k_2-4}
=(p ^{2k_1-2}-p ^{2k_2-2})(p ^{2k_1-2}+p ^{2k_2-2}).
\end{equation}
It follows that $\lambda_F(p^2)=p ^{2k_1-2}+p ^{2k_2-2}$. This equality leads to a contradiction due to \eqref{est lambda p 2 equal} and $k_1\geq k_2+1$.
\end{proof}
Then Theorem~\ref{distinguish-weight} immediately follows from Theorem~\ref{1.1} and Lemma~\ref{useful lemm} below.
\begin{lemma}[\mbox{\cite[c.f.~\S~2]{Gh2011} }]\label{useful lemm}
Let $N\geq 1$ be a positive integer, then we can find a prime $p$ such that $(p,N)=1$ and $p\leq 2\log N+2$.
\end{lemma}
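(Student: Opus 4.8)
The final statement is Lemma~\ref{useful lemm}: for any positive integer $N \geq 1$, there is a prime $p$ with $(p,N)=1$ and $p \leq 2\log N + 2$.

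\medskip

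The plan is to argue by contradiction: suppose every prime $p \leq 2\log N + 2$ divides $N$. Then $N$ is divisible by the product of all such primes, so the primorial $P \coloneqq \prod_{p \leq 2\log N + 2} p$ divides $N$, forcing $N \geq P$. The strategy is to show this is impossible by invoking a lower bound for the primorial function. Specifically, I would use the Chebyshev-type estimate $\prod_{p \leq x} p = e^{\theta(x)}$ where $\theta(x) = \sum_{p\le x}\log p$ is the first Chebyshev function, together with the standard bound $\theta(x) > c\,x$ for a suitable absolute constant $c$ valid for $x$ large enough (and checked directly for small $x$). If $2\log N + 2$ is large enough that $\theta(2\log N+2) > \log N$, i.e.\ roughly $c(2\log N + 2) > \log N$, which holds as soon as $c > 1/2$, we get $P > N$, contradicting $P \mid N$ (hence $P \le N$).

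\medskip

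Concretely, I would first dispose of the small cases $N = 1, 2, \dots$ by hand: when $N=1$ the prime $p=2$ works since $2 \leq 2\log 1 + 2 = 2$; for other small $N$ one just exhibits a witness prime (e.g.\ $2$ or $3$) and checks the numerical inequality. For the general case, I would cite a known effective form of Chebyshev's theorem — for instance $\prod_{p\le x} p > 2^x$ for $x \geq 31$, or the sharper $\theta(x) > 0.84\,x$ for $x \geq 101$ from Rosser--Schoenfeld — and note that $2\log N + 2 \geq 2^?$ region is reached once $N$ exceeds a modest explicit threshold; below that threshold the finitely many remaining $N$ are handled by direct inspection. Since this lemma is quoted from \cite[\S 2]{Gh2011}, I would simply refer to that source for the details rather than reproving the Chebyshev bound.

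\medskip

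The main obstacle is purely bookkeeping: making sure the transition point between the ``direct check'' range and the ``asymptotic bound'' range is handled cleanly, so that no value of $N$ falls through the cracks. There is no deep difficulty — the inequality $2\log N + 2$ versus the primorial of that quantity has an enormous amount of slack for large $N$ (the primorial grows like $e^{(1+o(1))x}$ while we only need it to exceed $N = e^{(x-2)/2}$), so any reasonable effective Chebyshev estimate suffices; the only care needed is at the bottom of the range.
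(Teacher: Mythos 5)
Your argument is correct: if every prime $p\le 2\log N+2$ divided $N$, the primorial of $2\log N+2$ would divide $N$, giving $\log N\ge\theta(2\log N+2)$ with $\theta$ the Chebyshev function, and this is impossible because $\theta(x)>(x-2)/2$ for all $x\ge 2$ (checked directly just below each small prime, and guaranteed for larger $x$ by any effective Chebyshev bound with constant exceeding $1/2$). The paper supplies no proof of this lemma --- it is quoted from Ghitza's paper --- and your primorial/Chebyshev argument is exactly the standard one underlying that citation, with the only delicate point (the bottom of the range, e.g.\ $N=1,2$) correctly flagged in your write-up.
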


\section{Proof of Theorem~\ref{disti-maeda}}
In this section, we only consider $\Gamma_2=\Sp(4, \Z)$. Let $m_k=\dim_{\C}\mathcal{S}_k(\Gamma_2)$. For $\mathbf{(*)}\in\{\mathbf{(G)}, \mathbf{(P)}\}$, recall that the set $\mathcal{K}^{\mathbf{(*)}}(2)$ of integers is defined as in \eqref{weak Maeda’s conjecture}. Moreover, we let $m_{k}^{\mathbf{(*)}}=\dim_\C\mathcal{S}_{k}^{\mathbf{(*)}}(\Gamma_2)$. Evidently, $m_{k}=m_{k}^{\mathbf{(P)}}+m_{k}^{\mathbf{(G)}}$. It is well known that $m_{k}^{\mathbf{(P)}}>0$ only if $k\in \Z_{\geq 10}$ is even and $m_{k}^{\mathbf{(G)}}>0$ only if $k\in \Z_{\geq 20}$; for example see \cite[Theorem~3.1]{RoySchmidtYi2021}. We also note that $m_k^{(\mathbf{P})}=\dim_{\mathbb{C}}\mathcal{S}_{2k-2}(\Gamma_1)$.
\begin{proof}[Proof of Theorem~\ref{disti-maeda}] We are going to separate into the following three cases.

\textbf{Case I:} If $F$ and $G$ both are Saito-Kurokawa liftings, say $F\in \mathcal{S}_{k_1}^{\mathbf{(P)}}(\Gamma_2)$ and $G\in \mathcal{S}_{k_2}^{\mathbf{(P)}}(\Gamma_2)$ with $k_1, k_2\in \mathcal{K}^{\mathbf{(P)}}(2)$, then we can write $F=F_f$ and $G=F_g$, which are lifts from $f\in\mathcal{S}_{2k_1-2}(\Gamma_1)$ and $g\in\mathcal{S}_{2k_2-2}(\Gamma_1)$, respectively. Recall that if $f$ and $g$ are Hecke eigenforms, then both $F_f$ and $F_g$ are also Hecke eigenforms.
Let $T_{2k-2}^{(1)}(2)$ be the Hecke operator on $f\in\mathcal{S}_{2k-2}(\Gamma_1)$ with Hecke eigenvalue $\lambda_f(2)$, and let $T_k(2)$ be the Hecke operator on $F_f\in \mathcal{S}_k(\Gamma_2)$ with Hecke eigenvalue $\lambda_{F_f}(2)$. Then we have
\begin{equation}\label{eigenvalues of P type}
    \lambda_{F_f}(2)=2^{k-1}+2^{k-2}+\lambda_f(2).
\end{equation}
Moreover, let $P(T_k^{(\mathbf{P})}(2),t)$ be the characteristic polynomial of $T_k(2)$ on $\mathcal{S}_k^{\mathbf{(P)}}(\Gamma_2)$, which is irreducible if $k\in \mathcal{K}^{\mathbf{(P)}}(2)$. For $k\in \mathcal{K}^{\mathbf{(P)}}(2)$ we can see that the characteristic polynomial $P(T_{2k-2}^{(1)}(2),t)$ of $T_{2k-2}^{(1)}(2)$ is irreducible as well since $P(T_k^{(\mathbf{P})}(2),t)=P(T_{2k-2}^{(1)}(2),t-2^{k-1}-2^{k-2})$. We can further assume that  $f$ is not a constant multiple of $g$ since the Saito-Kurokawa lifting is injective.
\begin{enumerate}
    \item[I-i).] If $k_1=k_2=k$, then $\lambda_{F_f}(2)\neq\lambda_{F_g}(2)$ due to the fact that the irreducible characteristic polynomial $P(T_k^{(\mathbf{P})}(2),t)$ has distinct roots.
    \item[I-ii).] If $m_{k_1}^{(\mathbf{P})}\neq m_{k_2}^{(\mathbf{P})}$, then $\deg P(T_{k_1}^{(\mathbf{P})}(2),t)\neq \deg P(T_{k_2}^{(\mathbf{P})}(2),t)$. Recall that both of them are irreducible, it follows that $P(T_{k_1}^{(\mathbf{P})}(2),t)$ and $P(T_{k_2}^{(\mathbf{P})}(2),t)$ have distinct roots. Hence, $\lambda_{F_f}(2)\neq\lambda_{F_g}(2)$.
    \item[I-iii).] If $m_{k_1}^{(\mathbf{P})}=m_{k_2}^{(\mathbf{P})}\geq 1$ and $k_1\neq k_2$, it is clear that $2k_1-2,2k_2-2\geq 18$. Additionally, we can show that there exists $n\geq 1$ such that $2k_1-2,2k_2-2\in\{12n+6,12n+10,12n+14\}$ since $k_1, k_2$ are even and $m_{k_1}^{(\mathbf{P})}=m_{k_2}^{(\mathbf{P})}$. On the other hand, we can show that
\begin{equation}\label{trace of P type}
    \Tr T_{k_1}^{(\mathbf{P})}(2)=m^{(\mathbf{P})}_{k_1}(2^{k_1-1}+2^{k_1-2})+\Tr T_{2k_1-2}^{(1)}(2).
\end{equation}
Assume that $k_1=k_2+l$ with $l>0$. Then by the choice of $k_1,k_2$, we know that $l\in\{2,4\}$. Let $l=2^m$ as in \cite[Corollary~3.4]{VilardiXue2018}, and so $m\in \{1, 2\}$. It follows from $m_{k_1}^{(\mathbf{P})}=m_{k_2}^{(\mathbf{P})}$ and \eqref{trace of P type} that
\begin{equation*}
\Tr T_{k_1}^{(\mathbf{P})}(2)-\Tr T_{k_2}^{(\mathbf{P})}(2)
=2^{k_2-2}\left(m^{(\mathbf{P})}_{k_2}(2^{k_1-k_2+1}+2^{k_1-k_2}-3)+a_{k_2-1, l}-2^{m+5-k_2}c_{k_2-1, l}\right),    
\end{equation*}
where $a_{k_2-1,l}$ is an integer and $c_{k_2-1,l}$ is an odd integer as in the proof of \cite[Corollary~3.4]{VilardiXue2018}. However, we know that $m+5-k_2\leq -3$ since $m\leq 2$ and $k_2\geq 10$. Therefore, $2^{m+5-k_2}c_{k_2-1,l}$ is not an integer and hence $\Tr T_{k_1}^{(\mathbf{P})}(2)\neq\Tr T_{k_2}^{(\mathbf{P})}(2)$. By irreducibility of characteristic polynomials $\Tr T_{k_1}^{(\mathbf{P})}(2)=\Tr \lambda_{F_f}(2)$ and $\Tr T_{k_2}^{(\mathbf{P})}(2)=\Tr \lambda_{F_g}(2)$, which implies that $\lambda_{F_f}(2)\neq \lambda_{F_g}(2)$ as desired.
\end{enumerate}

\textbf{Case II:} If $F$ and $G$ both are non-liftings, say $F\in \mathcal{S}_{k_1}^{\mathbf{(G)}}(\Gamma_2)$ and $G\in \mathcal{S}_{k_2}^{\mathbf{(G)}}(\Gamma_2)$, then we can apply the similar arguments as in the above Case I. More precisely, we will separate the following three cases:
\begin{enumerate}
\item[II-i).] If $k_1=k_2=k\in \mathcal{K}^{\mathbf{(G)}}(2)$, as the characteristic polynomial of $T_{k}^{(\mathbf{G})}(2)$ is irreducible by assumption, then all of its roots are distinct. Thus if $F\neq c\cdot G$ for any non-zero constant $c$, then $\lambda_F( 2)\neq \lambda_G( 2)$. 
\item[II-ii).] If $k_1\neq k_2$, then it follows from straightforward computations by using \cite[Theorem~3.1]{RoySchmidtYi2021} that $m_{k_1}^{\mathbf{(G)}}>m_{k_2}^{\mathbf{(G)}}$ for any $k_1>k_2\geq 40$. In particular, we can conclude $\lambda_F( 2)\neq \lambda_G( 2)$ with the similar argument in the above I-ii).
\item[II-iii).] If $k_1\neq k_2$ and $m_{k_1}^{\mathbf{(G)}}=m^{\mathbf{(G)}}_{k_2}$, then we can just use \cite{BFvdGweb2017} to see that $\Tr T_{k_1}^{(\mathbf{G})}(2)\neq \Tr T_{k_2}^{(\mathbf{G})}(2)$ for all even weights $k_1, k_2\leq 40$. Hence the desired assertion follows.
\end{enumerate}

\textbf{Case III:} If one of $F$ and $G$ is a Saito-Kurokawa lifting and the other one is non-lifting, say $F\in \mathcal{S}_{k_1}^{\mathbf{(P)}}(\Gamma_2)$ and $G\in \mathcal{S}_{k_2}^{\mathbf{(G)}}(\Gamma_2)$. It follows from \eqref{lambda p} and \eqref{eigenvalues of P type} that if $k_1-k_2\geq 6$, then we must have $\lambda_{F}(2)>\lambda_G(2)$. Next, we only need to consider $k_1-k_2\leq 4$ cases. Again, by \cite[Theorem~3.1]{RoySchmidtYi2021} we can easily to see that $m_{k_1}^{\mathbf{(P)}}\neq m_{k_2}^{\mathbf{(G)}}$ unless $k_2\in S\coloneqq\{20, 22, 24, 26, 28,30, 32\}$. By \cite{Breulmann1999}, we know that the Hecke eigenvalues $\lambda_{F}(n)>0$ for all $n$. Then by irreducibility of characteristic polynomials $\Tr T_{k_1}^{\mathbf{(P)}}(2)=\Tr\lambda_{F}(2)>0$. On the other hand, for every $k_2\in S$, by \cite{BFvdGweb2017} we can see that $\Tr T_{k_2}^{\mathbf{(G)}}(2)<0$ and so $\Tr\lambda_G(2)=\Tr T_{k_2}^{\mathbf{(G)}}(2)\neq \Tr T_{k_1}^{\mathbf{(P)}}(2)$. In particular, we have $\lambda_F(2)\neq \lambda_G(2)$. Therefore we complete the proof of theorem.
\end{proof}
Since Saito-Kurokawa liftings only happen for even weights, there is no need to discuss the odd weights situation for Case I and Case III in the proof above. However, we still can consider the Case II, i.e., both of $F$ and $G$ are non-liftings with $k_1$ and $k_2$ being odd integers. In particular, with a similar argument, we can show the following result.
\begin{corollary}
Let $k_1, k_2\in\mathcal{K}^{\mathbf{(G)}}(2)$ be two odd positive integers, where $k_1$ and $k_2$ may equal. Let $F\in\mathcal{S}_{k_1}(\Gamma_2)$ and $G\in\mathcal{S}_{k_2}(\Gamma_2)$ be Hecke eigenforms. If $\lambda_F(2)=\lambda_G(2)$, then $F=c\cdot G$ for some non-zero constant $c$.
\end{corollary}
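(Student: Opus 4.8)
The plan is to follow case~(2) in the proof of Theorem~\ref{disti-maeda} essentially verbatim, exploiting the fact that in odd weight $k$ there are no Saito--Kurokawa liftings, so $\mathcal{S}_k(\Gamma_2)=\mathcal{S}_k^{\mathbf{(G)}}(\Gamma_2)$. Consequently every Hecke eigenform of odd weight is automatically a non-lifting, $T_k(2)$ acts on the full space $\mathcal{S}_k(\Gamma_2)$, and the hypothesis $k\in\mathcal{K}^{\mathbf{(G)}}(2)$ says precisely that the characteristic polynomial of $T_k(2)$ on $\mathcal{S}_k(\Gamma_2)$ is irreducible over $\Q$, hence separable. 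I would also recall at the outset that, by Igusa's results, $\mathcal{S}_k(\Gamma_2)=0$ for odd $k<35$, so only weights $k\geq 35$ enter the discussion.

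First I would dispose of the case $k_1=k_2=k$. The roots of the irreducible characteristic polynomial of $T_k(2)$---namely the eigenvalues $\lambda_H(2)$ as $H$ runs through a Hecke eigenbasis of $\mathcal{S}_k(\Gamma_2)$---are pairwise distinct, so $\lambda_F(2)=\lambda_G(2)$ forces $F$ and $G$ to span the same one-dimensional eigenspace, i.e.\ $F=c\cdot G$ for some non-zero constant $c$.

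Next I would treat $k_1\neq k_2$, say $k_1>k_2$. Now $\lambda_F(2)$ is a root of the irreducible polynomial $P(T_{k_1}^{(\mathbf{G})}(2),t)$ and $\lambda_G(2)$ is a root of the irreducible polynomial $P(T_{k_2}^{(\mathbf{G})}(2),t)$; since two irreducible polynomials over $\Q$ are either equal or coprime, $\lambda_F(2)=\lambda_G(2)$ can only occur if these polynomials coincide. This is impossible when $\deg P(T_{k_1}^{(\mathbf{G})}(2),t)\neq\deg P(T_{k_2}^{(\mathbf{G})}(2),t)$, i.e.\ when $m_{k_1}^{\mathbf{(G)}}\neq m_{k_2}^{\mathbf{(G)}}$, and when the degrees agree it is impossible provided $\Tr T_{k_1}^{(\mathbf{G})}(2)\neq\Tr T_{k_2}^{(\mathbf{G})}(2)$. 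For the dimensions I would invoke the odd-weight dimension formula for $\mathcal{S}_k(\Gamma_2)$ (e.g.\ \cite[Theorem~3.1]{RoySchmidtYi2021}) to show that $k\mapsto m_k^{\mathbf{(G)}}$ is strictly increasing on odd $k$ beyond an explicit bound, so that $m_{k_1}^{\mathbf{(G)}}>m_{k_2}^{\mathbf{(G)}}$ for all sufficiently large odd $k_1>k_2$; this reduces matters to finitely many pairs of small odd weights with equal dimension, for which one reads off from the tables in \cite{BFvdGweb2017} that the $T(2)$-traces are distinct.

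The main obstacle, exactly as in the even-weight case, is this final bookkeeping step: one must make the monotonicity range for $m_k^{\mathbf{(G)}}$ completely explicit so as to pin down the short list of small odd weights requiring inspection, and then confirm from the available trace data that no two of them share the value of $\Tr T(2)$. Everything else is formal and identical to the argument already given for Theorem~\ref{disti-maeda}, so no genuinely new input is needed.
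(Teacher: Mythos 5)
Your proposal is correct and follows exactly the route the paper intends: the paper gives no separate proof of this corollary, merely remarking that case~(2) of the proof of Theorem~\ref{disti-maeda} carries over verbatim since odd weight forces $\mathcal{S}_k(\Gamma_2)=\mathcal{S}_k^{\mathbf{(G)}}(\Gamma_2)$, which is precisely your argument. The remaining bookkeeping you flag (explicit monotonicity of $m_k^{\mathbf{(G)}}$ for odd $k$ via \cite[Theorem~3.1]{RoySchmidtYi2021} and a trace check against \cite{BFvdGweb2017} for the finitely many small weights) is the same step the paper itself leaves implicit, so you are not missing anything beyond what the authors also omit.
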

\begin{remark}
Our approach cannot apply for the case that $k_1$ and $k_2$ have the different parity. It would be interesting to work out a general result of Theorem~\ref{disti-maeda} without any restriction of weights.
\end{remark}
\section{Distinguishing Hecke eigenforms by using \texorpdfstring{$L$}{L}-functions}\label{sect distinguish by L function}

In this section, our main goal is to prove Proposition~\ref{Detemination of SK} for Saito-Kurokawa liftings and Theorem~\ref{dis-non} for non-liftings by using the machine of $L$-functions.
\begin{proof}[Proof of Proposition~\ref{Detemination of SK}] The proof is essentially based on \cite[Theorem~B]{LR1997}. In fact, it suffices to assume that $d<0$.
For a Saito-Kurokawa lifting $F_f$, by \eqref{twisted L-function for SK lift} we have
\begin{equation}
L(1/2,\pi_{F_f}\times\chi_d,\rho_4\otimes\sigma_1)=L(0,\chi_d)L(1,\chi_d)L(1/2,\pi_f\times \chi_d),
\end{equation}
where $\sigma_1$ is the standard representation of the dual group $\C^\times$. By the well-known result of Dirichlet, we have $L(1,\chi_d)\neq 0$. Then by the functional equation of $L(s,\chi_d)$, we get that $L(0,\chi_d)\neq 0$ as well. Moreover, we have $\xi_d(-1)=-1$ since $d<0$. Then by the assumption \eqref{main assumption for G type Thm 3} we obtain that 
\begin{equation}
L(1/2,\pi_f\times \chi_d)=c\cdot L(1/2,\pi_g\times\chi_d)
\end{equation}
for almost all quadratic Hecke characters $\chi_d$ of $\Q^\times\backslash\A^\times$, which are corresponding to primitive Dirichlet quadratic characters $\xi_d$ of conductor $d$. Recall that $f$ is of weight $2k_1-2$ and $g$ is of weight $2k_2-2$ with $k_1, k_2$ even, then the root numbers of the cuspidal automorphic  representation associated to $f$ and $g$ are $-1$. Similar to \cite[(3.1)]{LR1997}, we also have the set 
\begin{equation}
\mathcal{D}^{\omega}=\{d\in\Z\colon \mbox{$\omega d>0, d\equiv v^2 \pmod{4M}$, for some $v$ coprime to $4M$ and $M$ is an integer}\},
\end{equation}
where $\omega$ is the root number. This is exactly our case since we assume that $d<0$ and $\omega=-1$.
In this case, for any $d\in\mathcal{D}^{\omega}$, we can find a non-zero constant $c$ such that
\begin{equation}
L(1/2,\pi_f\times\chi_d)=c\cdot L(1/2,\pi_g\times\chi_d).
\end{equation}
By the virtual of \cite[Theorem~B]{LR1997}, we have $k_1=k_2$ and $f=g$. Therefore, $F_f=F_g$ as desired. 
\end{proof}

\begin{proof}[Proof of Theorem~\ref{dis-non}]
We would consider the following integral
\begin{equation}
\frac{1}{2\pi i}\int_{(2)}\left(\frac{x^{s-\frac{1}{2}}-x^{\frac{1}{2}-s}}{s-\frac{1}{2}}\right)^2\left(-\frac{Z'}{Z}(s)\right)\,ds,
\end{equation}
where later we will choose $Z(s)$ to be $L(s,\pi_F\times \pi_F,\rho_4\otimes\rho_4)$ and $L(s,\pi_F\times \pi_G,\rho_4\otimes\rho_4)$, respectively. Assume that 
\begin{equation}
-\frac{L'}{L}(s,\pi_F\times \pi_F,\rho_4\otimes\rho_4)=\sum_{n=1}^{\infty}\frac{\Lambda_{F\times F}(n)}{n^s}\quad\text{and}\quad -\frac{L'}{L}(s,\pi_F\times \pi_G,\rho_4\otimes\rho_4)=\sum_{n=1}^{\infty}\frac{\Lambda_{F\times G}(n)}{n^s}.    
\end{equation}
Following the idea of \cite{GoHo1993}, for $x>0$ we can show that 
\begin{equation}\label{sum of Lambda F F}
 2\sum_{n<x^2}\frac{\Lambda_{F\times F}(n)}{n^{\frac{1}{2}}}\log\left(\frac{x^2}{n}\right)=8(x-2+x^{-1})-4\sum_{\gamma}\frac{\sin^2(\gamma\log x)}{\gamma^2}+J_1,   
\end{equation}
where $\frac{1}{2}+i\gamma$ runs over the non-trivial zeros of $L(s,\pi_F\times \pi_F,\rho_4\otimes\rho_4)$ and
\begin{equation}
J_1=\frac{1}{2\pi i}\int_{(1/2)}\left(\frac{G_1'}{G_1}(s)+\frac{G_1'}{G_1}(1-s)\right)\left(\frac{x^{s-\frac{1}{2}}-x^{\frac{1}{2}-s}}{s-\frac{1}{2}}\right)^2\,ds.
\end{equation}
Here, $G_1(s)$ is the archimedean part of $L(s,\pi_F\times \pi_F,\rho_4\otimes\rho_4)$; see Proposition~\ref{gammaspintensor} with $k_1=k_2$. Note that we moved the integration line to $\re(s)=1/2$ since there exist no poles of $G_1(s)$ when $1/4\leq\re(s)\leq 3/4$ by Proposition~\ref{gammaspintensor} in Appendix~\ref{Archimedean factors append}.

Similarly, in the case of $L(s,\pi_F\times \pi_G,\rho_4\otimes\rho_4)$, we have  
\begin{equation}\label{sum of Lambda F G}
    2\sum_{n<x^2}\frac{\Lambda_{F\times G}(n)}{n^{\frac{1}{2}}}\log\left(\frac{x^2}{n}\right)=-4\sum_{\gamma'}\frac{\sin^2(\gamma'\log x)}{(\gamma')^2}+J_2,
\end{equation}
where $\frac{1}{2}+i\gamma'$ runs over the non-trivial zeros of $L(s,\pi_F\times \pi_G,\rho_4\otimes\rho_4)$ and
\begin{equation}
J_2=\frac{1}{2\pi i}\int_{(1/2)}\left(\frac{G_2'}{G_2}(s)+\frac{G_2'}{G_2}(1-s)\right)\left(\frac{x^{s-\frac{1}{2}}-x^{\frac{1}{2}-s}}{s-\frac{1}{2}}\right)^2\,ds.
\end{equation}
Here, $G_2(s)$ is the archimedean part of $L(s,\pi_F\times \pi_G,\rho_4\otimes\rho_4)$. By \cite[Proposition~5.7]{IwKo2004}, we can show that
\begin{equation}
\sum_{\gamma}\frac{\sin^2(\gamma\log x)}{\gamma^2},\hspace{4mm}\sum_{\gamma'}\frac{\sin^2(\gamma'\log x)}{(\gamma')^2}\ll\log(k_1k_2)(\log x)^2.
\end{equation}
By Stirling's formula, we can show that
\begin{equation}
J_1, J_2\ll O(\log(k_1k_2)(\log x)^2).
\end{equation}
Suppose that $\Lambda_{F\times F}(n)=\Lambda_{F\times G}(n)$ for all $n<x^2$. Subtracting \eqref{sum of Lambda F G} from \eqref{sum of Lambda F F} implies
\begin{equation}
0=8(x-2+x^{-1})+O((\log k_1k_2)(\log x)^2).
\end{equation}
This will give a contradiction when $x\gg(\log k_1k_2)(\log\log k_1k_2)^2$. That is, if $F$ is not a multiple of $G$, then we can find a sufficiently large $C$ such that, for some integer $n\leq C(\log k_1k_2)^2(\log\log k_1k_2)^4$, $\Lambda_{F\times F}(n)\neq \Lambda_{F\times G}(n)$. Then Theorem~\ref{dis-non} can be deduced by Lemma~\ref{rkselemma} below.
\end{proof}

\begin{lemma}\label{rkselemma}
Assume the notations above. Suppose that we can find $A$ such that $\Lambda_{F\times F}(n)\neq\Lambda_{F\times G}(n)$ for some $n\leq A$. Then we can find $n\leq A$ such that $a_F(n)\neq a_G(n)$. Moreover, for such $n\leq A$ we have $\tilde{\lambda}_F(n)\neq \tilde{\lambda}_G(n)$.
\end{lemma}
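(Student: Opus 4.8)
The plan is to compare the three Dirichlet series prime by prime through their Euler factors, reducing everything to elementary identities among symmetric functions. Fix a prime $p$; since $F$ and $G$ have level one, all local factors are unramified, and we may write the Satake parameters of $\pi_F$ attached to $\rho_4$ as the multiset $\alpha(p)=\{\alpha_p,\alpha_p^{-1},\beta_p,\beta_p^{-1}\}$ as in \eqref{lambda p}, and $\beta(p)$ for those of $\pi_G$. Then
\[
L_p(s,\pi_F,\rho_4)=\prod_{i=1}^{4}\bigl(1-\alpha_i(p)p^{-s}\bigr)^{-1},
\qquad
L_p(s,\pi_F\times\pi_G,\rho_4\otimes\rho_4)=\prod_{i,j=1}^{4}\bigl(1-\alpha_i(p)\beta_j(p)p^{-s}\bigr)^{-1},
\]
and similarly for $F\times F$. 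Writing $h_m$ for the complete homogeneous symmetric polynomial and $p_m$ for the power sum, a logarithmic derivative at $p$ gives
\[
a_F(p^m)=h_m(\alpha(p)),\qquad
\Lambda_{F\times F}(p^m)=(\log p)\,p_m(\alpha(p))^2,\qquad
\Lambda_{F\times G}(p^m)=(\log p)\,p_m(\alpha(p))\,p_m(\beta(p)),
\]
while $\Lambda_{F\times F}$ and $\Lambda_{F\times G}$ are supported on prime powers and $a_F,a_G$ are multiplicative. Finally, \eqref{nor-ei-spin} reads locally as $\tilde{\lambda}_F(p^m)=a_F(p^m)-p^{-1}a_F(p^{m-2})$, equivalently $a_F(p^m)=\sum_{j\ge 0}p^{-j}\tilde{\lambda}_F(p^{m-2j})$, with the convention $a_F(p^0)=\tilde{\lambda}_F(p^0)=1$ and $a_F(p^{-r})=\tilde{\lambda}_F(p^{-r})=0$ for $r>0$.

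I would prove the first assertion by contraposition: suppose $a_F(n)=a_G(n)$ for every $n\le A$. By multiplicativity this forces $a_F(q)=a_G(q)$, hence $h_j(\alpha(p))=h_j(\beta(p))$, for every prime power $q=p^j\le A$. Now fix a prime power $p^m\le A$. Then $h_j(\alpha(p))=h_j(\beta(p))$ for all $j\le m$, and Newton's identity $m\,h_m=\sum_{j=1}^{m}p_j\,h_{m-j}$ --- a universal polynomial relation, independent of the number of variables --- lets us solve for $p_m$ as an integer polynomial in $h_1,\dots,h_m$, so $p_m(\alpha(p))=p_m(\beta(p))$. Therefore $\Lambda_{F\times F}(p^m)=(\log p)\,p_m(\alpha(p))^2=(\log p)\,p_m(\alpha(p))\,p_m(\beta(p))=\Lambda_{F\times G}(p^m)$, and for $n$ not a prime power both sides vanish. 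Hence $\Lambda_{F\times F}(n)=\Lambda_{F\times G}(n)$ for all $n\le A$, which is the contrapositive of the first claim.

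For the ``moreover'' clause, let $n_0$ be the smallest $n\le A$ with $a_F(n)\ne a_G(n)$, which exists by the previous paragraph. Multiplicativity forces $n_0$ to be a prime power, say $n_0=p^{m_0}$, and $a_F(p^j)=a_G(p^j)$ for all $j<m_0$; in particular $a_F(p^{m_0-2})=a_G(p^{m_0-2})$ (trivially so if $m_0\le 1$). From $a_F(p^{m_0})=\tilde{\lambda}_F(p^{m_0})+p^{-1}a_F(p^{m_0-2})$ we then get $\tilde{\lambda}_F(n_0)-\tilde{\lambda}_G(n_0)=a_F(n_0)-a_G(n_0)\ne 0$, so the single integer $n_0\le A$ witnesses $a_F(n_0)\ne a_G(n_0)$ and $\tilde{\lambda}_F(n_0)\ne\tilde{\lambda}_G(n_0)$ simultaneously.

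The only substantive point --- and the one I would be most careful with --- is the symmetric-function step in the second paragraph: that agreement of the spinor coefficients on all prime powers $\le A$ forces agreement of the power sums $p_m(\alpha(p))$ for every $p^m\le A$, which is precisely what annihilates $\Lambda_{F\times F}-\Lambda_{F\times G}$ below $A$. The remaining ingredients --- the unramified Euler products in degrees $4$ and $16$, multiplicativity of $a_F$, the support of $\Lambda$ on prime powers, and the fact that converting between $a_F(p^m)$ and $\tilde{\lambda}_F(p^m)$ only ever involves exponents at most $m$ --- are formal and involve no loss of information below $A$.
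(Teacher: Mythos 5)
Your proof is correct. It follows the same overall strategy as the paper---reduce everything to a prime-by-prime comparison of local Satake data---but the mechanism differs in two places, both to your advantage. For the first assertion the paper argues directly: it locates a prime power $p^r\le A$ with $\Lambda_{F\times F}(p^r)\neq\Lambda_{F\times G}(p^r)$ and splits into cases according to whether $p^2>A$ (so $r=1$ and one reads $a_F(p)\neq a_G(p)$ off $\Lambda_{F\times F}(p)=a_F(p)^2\log p$ versus $a_F(p)a_G(p)\log p$) or $p^2\le A$ (where agreement of $a_F(p^i)=a_G(p^i)$ for $i=1,2$ is shown to force equality of the multisets $\{\alpha_p^{\pm1},\beta_p^{\pm1}\}$, hence of all local factors). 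Your contrapositive via Newton's identities $m\,h_m=\sum_{j=1}^{m}p_j h_{m-j}$ replaces this case split with a single uniform argument: agreement of $h_j$ for $j\le m$ gives agreement of $p_m$, which is exactly what kills $\Lambda_{F\times F}(p^m)-\Lambda_{F\times G}(p^m)$; you never need to recover the Satake parameters themselves. For the ``moreover'' clause the paper simply says it follows from \eqref{nor-ei-spin}; your choice of the \emph{minimal} $n_0$ with $a_F(n_0)\neq a_G(n_0)$, so that the lower-order term $p^{-1}a_F(p^{m_0-2})$ cancels, is the right way to make that step rigorous (read literally, ``for such $n$'' cannot mean ``for every such $n$,'' and your argument supplies the correct witness). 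Both proofs are sound; yours is the more systematic of the two.
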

\begin{proof}
As for the first assertion, notice that $\Lambda_{F\times F}(n)$ and $\Lambda_{F\times G}(n)$ are arithmetic functions supported on prime powers. So there exist a prime number $p$ and a positive integer $r$ such that $p^r\leq A$, and $\Lambda_{F\times F}(p^r)\neq \Lambda_{F\times G}(p^r)$. We consider two cases: when $p\geq \lfloor A^{1/2}\rfloor+1$ and $p\leq \lfloor A^{1/2}\rfloor$. Here, $\lfloor x\rfloor$ denotes the greatest integer less than or equal to $x$.

In the first case, $p^2>A$ and hence  $\Lambda_{F\times F}(p)\neq \Lambda_{F\times G}(p)$. It can be shown that $\Lambda_{F\times F}(p)=a_F(p)^2\log p$ and $\Lambda_{F\times G}(p)=a_F(p)a_G(p)\log p$. Therefore, we have $a_F(p)\neq a_G(p)$.

In the second case, we prove by contradiction. Suppose that $a_F(n)=a_G(n)$ for $n\leq A$. Then for $p\leq \lfloor A^{1/2}\rfloor$, we have $a_F(p^i)=a_G(p^i)$ for $i=1,2$. This implies that $F$ and $G$ have the same Satake parameters at $p$ (up to permutation), which can be obtained by \eqref{nor-ei-spin} and \eqref{lambda p}-\eqref{lambda p2}. This shows that $\Lambda_{F\times F}(p^r)=\Lambda_{F\times G}(p^r)$ for any $r$, which is a contradiction.

As for the second assertion, it follows from the relation between $a_F(n)$ and $\tilde{\lambda}_F(n)$; see \eqref{nor-ei-spin}.
\end{proof}

\begin{remark}
Suppose that
\begin{equation*}
 L(s,\pi_F,\rho_5)=\sum_{n=1}^{\infty}\frac{b_F(n)}{n^s}\quad\text{and}\quad L(s,\pi_G,\rho_5)=\sum_{n=1}^{\infty}\frac{b_G(n)}{n^s}.  
\end{equation*}
Assume that $L(s,\pi_F\times \pi_G,\rho_5\otimes\rho_5)$ and $L(s,\pi_F\times \pi_F,\rho_5\otimes\rho_5)$ satisfies the Generalized Riemann Hypothesis. A similar argument will show that if $F$ is not a scalar multiplication of $G$, then there exists an integer 
\[n\ll(\log k_1k_2)^2(\log\log k_1k_2)^4\]
such that $b_F(n)\neq b_G(n)$. Indeed, a direct calculation will show that, $\{b_F(p^r)\}_{r=1}^{\infty}$ will determined by $\{b_F(p),b_F(p^2)\}$ and hence we can obtain a result similar to the first assertion as in Lemma~\ref{rkselemma}.
\end{remark}

\begin{appendix}
\section{Archimedean factors associated to certain \texorpdfstring{$L$}{L}-functions}\label{Archimedean factors append}
In this section, we briefly discuss the calculation of the archimedean factors associated to the $L$-functions shown up in the proof of Theorem~\ref{dis-non} as in Section~\ref{sect distinguish by L function}.

Let $F\in\mathcal{S}_{k_1}(\Gamma_2)$ and $G\in\mathcal{S}_{k_2}(\Gamma_2)$ be Hecke eigenforms. Then we can associate the cuspidal automorphic  representations $\pi_F$ (resp. $\pi_G$) for $F$ (resp. $G$) of $\GSp(4, \A)$. For $\pi_F$, we can associate the completed spinor $L$-function and the completed standard $L$-function, denoted by $\Lambda(s,\pi_F,\rho_4)$ and $\Lambda(s,\pi_F,\rho_5)$, respectively. Moreover, via the Langlands transfer (see \cite[\S~5.1]{PiSaSc2014}), we can find $\Pi_4^F$ (resp. $\Pi_5^F$), which is a cuspidal automorphic representation of $\GL(4,\A)$ (resp. $\GL(5,\A)$) such that
\[\Lambda(s,\pi_F,\rho_4)=\Lambda(s,\Pi_4^F)\quad\text{and}\quad \Lambda(s,\pi_F,\rho_5)=\Lambda(s,\Pi_5^F).\]
In this case, the Rankin-Selberg $L$-function $\Lambda(s,\pi_F\times\pi_G,\rho_4\otimes\rho_4)$ and $\Lambda(s,\pi_F\times\pi_G,\rho_5\otimes\rho_5)$ is defined by the Rankin-Selberg convolutions on $\GL(4)\times\GL(4)$ and $\GL(5)\times\GL(5)$, respectively, i.e.,
\begin{equation}
 \Lambda(s,\pi_F\times\pi_G,\rho_4\otimes\rho_4)=\Lambda(s,\Pi_4^F\times\Pi_4^G)\quad\text{and}\quad \Lambda(s,\pi_F\times\pi_G,\rho_5\otimes\rho_5)=\Lambda(s,\Pi_5^F\times\Pi_5^G).  
\end{equation}
To calculate the associated archimedean factors, we recall some basic facts regarding the real Weil group $W_{\mathbb{R}}=\mathbb{C}^\times\sqcup j\mathbb{C}^\times$. Here, the multiplication on $\C^\times$ is standard, and $j$ is an element satisfying $j^2=-1$ and $jzj^{-1}=\bar{z}$ (complex conjugation) for $z\in\C^\times$. More precisely, we are considering representations of $W_\R$, which are continuous homomorphisms $W_\R\to \GL(n, \C)$ for some $n$ with the image consisting of semisimple elements. By \cite{Knapp1994}, every finite-dimensional semisimple representation of $W_\R$ is completely recucible, and each irreducible representation is either one- or two-dimensional. The complete list of one-dimensional representations is as follows:
\begin{align}
   \varphi_{+,t}&\colon re^{i\theta}\longmapsto r^{2t},\quad  j\mapsto 1,\\
   \varphi_{-,t}&\colon re^{i\theta}\longmapsto r^{2t},\quad  j\mapsto -1,
\end{align}
where $t\in\C$, and we write any non-zero complex number $z$ as $re^{i\theta}$ with $r\in \R_{>0}$ and $\theta\in \R/2\pi\Z$. The two-dimensional representations are precisely
\begin{equation}
    \varphi_{\ell, t}\colon re^{i\theta}\mapsto \left[\begin{smallmatrix}
r^{2t}e^{i\ell\theta}\\
&r^{2t}e^{-i\ell\theta}
\end{smallmatrix}\right],\qquad j\mapsto
\left[\begin{smallmatrix}
&(-1)^{\ell}\\
1
\end{smallmatrix}\right],
\end{equation}
where $\ell\in\Z_{>0}$ and $t\in\C$. And the corresponding $L$-factors, i.e., the archimedean factors, are given as follows:
\begin{equation}
    L_\infty(s, \varphi)=\begin{cases}
    \Gamma_\R(s+t)&\mbox{if $\varphi=\varphi_{+, t}$},\\
    \Gamma_\R(s+t+1)&\mbox{if $\varphi=\varphi_{-, t}$},\\
    \Gamma_\C(s+t+\frac{\ell}{2})&\mbox{if $\varphi=\varphi_{\ell, t}$}.
    \end{cases}
\end{equation}
Here,
\begin{equation}
    \Gamma_\R(s)\coloneqq\pi^{-s/2}\Gamma\left(\frac{s}{2}\right),\qquad \Gamma_\C(s)\coloneqq 2(2\pi)^{-s}\Gamma(s),
\end{equation}
where $\Gamma(s)$ is the usual gamma function. By a direct calculations we have the following lemma:
\begin{lemma}\label{Tensor product lemma}
For $\ell, \ell_1, \ell_2\in\Z_{>0}$ and $t_1,t_2\in\C$, we have
\begin{align}
\varphi_{+, t_1}\otimes\varphi_{+, t_2}=\varphi_{-, t_1}\otimes\varphi_{-, t_2}=&\varphi_{+, t_1+t_2}\label{++--}\\
\varphi_{+, t_1}\otimes\varphi_{-, t_2}=\varphi_{-, t_1}\otimes\varphi_{+, t_2}=&\varphi_{-, t_1+t_2}\label{+--+}\\
\varphi_{\pm, t_1}\otimes\varphi_{\ell, t_2}=&\varphi_{\ell, t_1+t_2}\\
\varphi_{\ell_1, t_1}\otimes\varphi_{\ell_2, t_2}=&\begin{cases}
\varphi_{\ell_1+\ell_2, t_1+t_2}\oplus\varphi_{|\ell_1-\ell_2|, t_1+t_2}&\text{if } \ell_1\neq\ell_2,\\
\varphi_{\ell_1+\ell_2, t_1+t_2}\oplus\varphi_{+, t_1+t_2}\oplus\varphi_{-, t_1+t_2}&\text{if } \ell_1=\ell_2.
\end{cases}\label{2tensor2}
\end{align}
\end{lemma}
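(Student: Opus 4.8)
The plan is to compute each tensor product directly from the explicit description of the irreducible representations of $W_\R$ given above. Recall that $W_\R = \C^\times \sqcup j\C^\times$, so a representation is determined by its restriction to $\C^\times$ together with the action of $j$, subject to the relations $j^2 = -1$ and $jzj^{-1} = \bar z$. For a tensor product $\varphi \otimes \psi$, the restriction to $\C^\times$ is the pointwise product of the two characters/matrix-valued maps, and the action of $j$ is the Kronecker product of the two $j$-matrices. The whole lemma is therefore a finite check; I would organize it by the rank of the factors.

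First I would dispose of the one-dimensional cases \eqref{++--} and \eqref{+--+}. On $\C^\times$ we have $\varphi_{\epsilon_1,t_1}\otimes\varphi_{\epsilon_2,t_2}\colon re^{i\theta}\mapsto r^{2t_1}r^{2t_2} = r^{2(t_1+t_2)}$, which matches the $\C^\times$-part of $\varphi_{\pm,t_1+t_2}$ regardless of signs. The sign on $j$ multiplies: $(\pm1)(\pm1)$, giving $+1$ when the signs agree and $-1$ when they differ, which is exactly \eqref{++--} and \eqref{+--+}. Next, for $\varphi_{\pm,t_1}\otimes\varphi_{\ell,t_2}$, tensoring the $2\times 2$ block $\mathrm{diag}(r^{2t_2}e^{i\ell\theta}, r^{2t_2}e^{-i\ell\theta})$ by the scalar $r^{2t_1}$ shifts $t_2 \mapsto t_1+t_2$ and leaves $\ell$ unchanged; the $j$-matrix $\left[\begin{smallmatrix}&(-1)^\ell\\1&\end{smallmatrix}\right]$ is scaled by $\pm1$, and one checks (conjugating by $\mathrm{diag}(1,\pm1)$ if the sign is $-1$) that this is isomorphic to $\varphi_{\ell, t_1+t_2}$, since replacing $j \mapsto -j$ in the $2$-dimensional representation gives an equivalent representation. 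This yields the third identity.

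The main work is \eqref{2tensor2}, the product of two $2$-dimensional representations. Restricting to $\C^\times$, the tensor product acts on $\C^2\otimes\C^2$ by $re^{i\theta}\mapsto r^{2(t_1+t_2)}\,\mathrm{diag}\big(e^{i(\ell_1+\ell_2)\theta}, e^{i(\ell_1-\ell_2)\theta}, e^{i(-\ell_1+\ell_2)\theta}, e^{i(-\ell_1-\ell_2)\theta}\big)$ in the obvious ordered basis $e_+\otimes e_+, e_+\otimes e_-, e_-\otimes e_+, e_-\otimes e_-$. When $\ell_1\neq\ell_2$ the four exponents $\pm(\ell_1+\ell_2), \pm(\ell_1-\ell_2)$ are nonzero and pair up as $\{+(\ell_1+\ell_2), -(\ell_1+\ell_2)\}$ and $\{+|\ell_1-\ell_2|, -|\ell_1-\ell_2|\}$; I would then check that the $j$-action (the Kronecker square of $\left[\begin{smallmatrix}&(-1)^{\ell_i}\\1&\end{smallmatrix}\right]$) preserves the two $2$-dimensional subspaces $\langle e_+\otimes e_+, e_-\otimes e_-\rangle$ and $\langle e_+\otimes e_-, e_-\otimes e_+\rangle$ (up to reordering) and restricts on each to a matrix of the required shape, with the correct sign $(-1)^{\ell_1+\ell_2}$ resp.\ $(-1)^{|\ell_1-\ell_2|}$ — note $(-1)^{\ell_1+\ell_2} = (-1)^{|\ell_1-\ell_2|}$, so both summands carry the same parity, consistent with the claim. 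When $\ell_1=\ell_2=\ell$, the middle two exponents vanish, so on the $2$-dimensional subspace $\langle e_+\otimes e_-, e_-\otimes e_+\rangle$ the $\C^\times$-action is trivial (after the $r^{2(t_1+t_2)}$ twist); the $j$-action there is an involution-up-to-sign that diagonalizes into a $+1$ and a $-1$ eigenspace, giving $\varphi_{+,t_1+t_2}\oplus\varphi_{-,t_1+t_2}$, while the outer subspace $\langle e_+\otimes e_+, e_-\otimes e_-\rangle$ gives $\varphi_{2\ell, t_1+t_2}$ as before. I expect the only delicate point to be bookkeeping the signs of the $j$-matrices and checking that the two sign-normalizations one is free to make (conjugating by $\mathrm{diag}(1,-1)$) suffice to bring every block into the canonical form listed; this is routine linear algebra but is where an error would most plausibly hide, so I would write out the $4\times 4$ Kronecker products of the $j$-matrices explicitly in each parity case $(-1)^{\ell_1}, (-1)^{\ell_2} \in \{\pm1\}$ to be safe.
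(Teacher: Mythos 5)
Your proposal is correct and is exactly the ``direct calculation'' the paper invokes without writing out: the paper offers no proof beyond that phrase, and you have carried out the computation correctly, including the two points that actually require care (that negating the $j$-matrix of $\varphi_{\ell,t}$ gives an equivalent representation via conjugation by $\mathrm{diag}(1,-1)$, and that when $\ell_1=\ell_2$ the middle $2$-dimensional block has trivial $\C^\times$-action with $j$ acting as an involution splitting into $\pm1$ eigenspaces). Nothing further is needed.
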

\begin{remark}\label{rk ell1=ell2 holds as well}
Recall that $\Gamma_\C(s)=\Gamma_\R(s)\Gamma_\R(s+1)$, the second case in \eqref{2tensor2} looks precisely like the first case in \eqref{2tensor2}, if we allow $\ell_1=\ell_2$. \end{remark}
For our purpose, we will only consider the case $t=0$; in this case, we write $\varphi_{\pm}$ instead of $\varphi_{\pm, 0}$ and $\varphi_\ell$ instead of $\varphi_{\ell, 0}$. It follows from \cite[\S~3.2]{Schmidt2017} (observing that $\lambda_1=k_1-1$ and $\lambda_2=k_1-2$) and \cite[Theorem~5.1.2]{PiSaSc2014} that the $L$-parameter of $\Pi_4^F$ at the archimedean place is given by:
\begin{equation}\label{degree 4 RS L factor}
\varphi_{2k_1-3}\oplus\varphi_{1}.  
\end{equation}

Then we have the following proposition proved by Gun, Kohnen and Paul in \cite[pp.~56-57]{GunKohnenPaul2021}. In their proof, they considered two separated cases ($k_1>k_2$ and $k_1=k_2$). Observing Remark~\ref{rk ell1=ell2 holds as well}, these two cases can be combined as follows:
\begin{proposition}\label{gammaspintensor}
Assume the notations above. The archimedean factor of $\Lambda(s,\pi_F\times\pi_G,\rho_4\otimes\rho_4)$ is given by:
\begin{equation*}
\begin{split}
&\Gamma_\C\left(s+k_1+k_2-3\right)\Gamma_\C\left(s+k_1-1\right)\Gamma_\C(s+k_2-1)\Gamma_\C\left(s+k_1-2\right)\Gamma_\C\left(s+k_2-2\right)\\
&\Gamma_\C(s+1)\Gamma_\C(s+|k_1-k_2|)\Gamma_\R(s)\Gamma_\R(s+1).
\end{split}
\end{equation*}
\end{proposition}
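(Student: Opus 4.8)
The plan is to compute the archimedean factor of $L(s,\pi_F\times\pi_G,\rho_4\otimes\rho_4) = \Lambda(s,\Pi_4^F\times\Pi_4^G)$ by taking the tensor product of the two local $L$-parameters at infinity and then reading off the $L$-factor of each irreducible summand. From the discussion just before the statement, the archimedean $L$-parameter of $\Pi_4^F$ is $\varphi_{2k_1-3}\oplus\varphi_1$, and correspondingly that of $\Pi_4^G$ is $\varphi_{2k_2-3}\oplus\varphi_1$. First I would form the tensor product
\[
(\varphi_{2k_1-3}\oplus\varphi_1)\otimes(\varphi_{2k_2-3}\oplus\varphi_1)
=(\varphi_{2k_1-3}\otimes\varphi_{2k_2-3})\oplus(\varphi_{2k_1-3}\otimes\varphi_1)\oplus(\varphi_1\otimes\varphi_{2k_2-3})\oplus(\varphi_1\otimes\varphi_1),
\]
and then simplify each of the four pieces using Lemma~\ref{Tensor product lemma} (with all $t_i=0$).

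The second step is to apply the tensor rules termwise. By \eqref{2tensor2}, $\varphi_{2k_1-3}\otimes\varphi_{2k_2-3}$ equals $\varphi_{2k_1+2k_2-6}\oplus\varphi_{|2k_1-2k_2|}$ when $k_1\neq k_2$, and $\varphi_{4k_1-6}\oplus\varphi_{+}\oplus\varphi_{-}$ when $k_1=k_2$; by Remark~\ref{rk ell1=ell2 holds as well}, using $\Gamma_\C(s)=\Gamma_\R(s)\Gamma_\R(s+1)$, the two cases give the same $L$-factor, so I may treat them uniformly and write the contribution as $\Gamma_\C(s+(k_1+k_2-3))\,\Gamma_\C(s+|k_1-k_2|)$ (interpreting $\varphi_0$ via the $\ell_1=\ell_2$ convention). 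By the third rule in Lemma~\ref{Tensor product lemma}, $\varphi_{2k_1-3}\otimes\varphi_1=\varphi_{2k_1-2}$ and $\varphi_1\otimes\varphi_{2k_2-3}=\varphi_{2k_2-2}$; and $\varphi_1\otimes\varphi_1=\varphi_2\oplus\varphi_+\oplus\varphi_-$ by the $\ell_1=\ell_2$ case of \eqref{2tensor2}. Collecting the associated $L$-factors: $\Gamma_\C(s+\tfrac{2k_1+2k_2-6}{2})=\Gamma_\C(s+k_1+k_2-3)$ and $\Gamma_\C(s+\tfrac{|2k_1-2k_2|}{2})=\Gamma_\C(s+|k_1-k_2|)$ from the first piece; $\Gamma_\C(s+\tfrac{2k_1-2}{2})=\Gamma_\C(s+k_1-1)$ and $\Gamma_\C(s+k_2-1)$ from the two cross terms; and $\Gamma_\C(s+1)\Gamma_\R(s)\Gamma_\R(s+1)$ from $\varphi_1\otimes\varphi_1$.

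Finally I would reconcile this with the stated formula, which also displays the factors $\Gamma_\C(s+k_1-2)$ and $\Gamma_\C(s+k_2-2)$. The point is that the statement uses the alternate (but equal, via Remark~\ref{rk ell1=ell2 holds as well}) bookkeeping in which the $k_1=k_2$ degeneracies of $\varphi_{2k_i-3}\otimes\varphi_1$-type terms are not invoked; more precisely one should split $\varphi_{2k_i-2}$'s $L$-factor as $\Gamma_\C(s+k_i-1)=\Gamma_\R(s+k_i-1)\Gamma_\R(s+k_i)$ is \emph{not} what is happening — rather, I expect the cleanest route is to recompute $\varphi_{2k_1-3}\otimes\varphi_1$ keeping track that in the relevant normalization the parameter of $\Pi_4^F$ carries a half-integral shift so that the cross terms genuinely produce $\Gamma_\C(s+k_1-1)\Gamma_\C(s+k_1-2)$ (a contribution of $\varphi_{2k_1-2}\oplus\varphi_{2k_1-4}$ coming from $\varphi_{2k_1-3}\otimes\varphi_1$ would be wrong, so I will instead double-check the parameter conventions from \cite[\S~3.2]{Schmidt2017} against \eqref{eisp}). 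The main obstacle will be exactly this bookkeeping of which two-dimensional summands appear and matching the precise indices $k_i-1$ versus $k_i-2$; once the parameter of $\Pi_4^F$ at infinity is pinned down unambiguously, the rest is a mechanical application of Lemma~\ref{Tensor product lemma} and the definition of $L_\infty(s,\varphi)$, followed by assembling the product in the order displayed.
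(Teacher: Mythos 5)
Your overall strategy is the same as the paper's (tensor the archimedean $L$-parameters $\varphi_{2k_1-3}\oplus\varphi_1$ and $\varphi_{2k_2-3}\oplus\varphi_1$ and apply Lemma~\ref{Tensor product lemma} summand by summand), but you misapply the lemma on the cross terms, and this is a genuine error rather than a bookkeeping issue. You invoke ``the third rule'' to get $\varphi_{2k_1-3}\otimes\varphi_1=\varphi_{2k_1-2}$, but that rule is $\varphi_{\pm,t_1}\otimes\varphi_{\ell,t_2}=\varphi_{\ell,t_1+t_2}$, i.e.\ it applies only when one factor is \emph{one-dimensional}. Here $\varphi_1=\varphi_{1,0}$ is the two-dimensional representation with $\ell=1$, so the relevant rule is \eqref{2tensor2} with $\ell_1=2k_1-3$, $\ell_2=1$ (and $\ell_1\neq\ell_2$ since $k_1\geq 3$), which gives $\varphi_{2k_1-3}\otimes\varphi_1=\varphi_{2k_1-2}\oplus\varphi_{2k_1-4}$, hence the $L$-factor $\Gamma_\C(s+k_1-1)\Gamma_\C(s+k_1-2)$; likewise for the other cross term. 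A dimension count already flags the problem: the full tensor product is $4\times 4=16$-dimensional, whereas your list of constituents ($\varphi_{k_1+k_2-3}$-type, $\varphi_{|2k_1-2k_2|}$, $\varphi_{2k_1-2}$, $\varphi_{2k_2-2}$, $\varphi_2$, $\varphi_+$, $\varphi_-$) totals only $12$. The four missing dimensions are exactly $\varphi_{2k_1-4}\oplus\varphi_{2k_2-4}$, which account for the factors $\Gamma_\C(s+k_1-2)\Gamma_\C(s+k_2-2)$ in the statement.

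Your closing paragraph then compounds the error: noticing the mismatch, you propose to blame the normalization of the parameter of $\Pi_4^F$ and assert that a contribution $\varphi_{2k_1-2}\oplus\varphi_{2k_1-4}$ from $\varphi_{2k_1-3}\otimes\varphi_1$ ``would be wrong.'' It is in fact exactly right, and the parameter $\varphi_{2k_1-3}\oplus\varphi_1$ stated before the proposition needs no half-integral shift. Once you replace the cross-term computation by the correct instance of \eqref{2tensor2}, the rest of your argument (the $\varphi_{2k_1-3}\otimes\varphi_{2k_2-3}$ term handled uniformly via Remark~\ref{rk ell1=ell2 holds as well}, and $\varphi_1\otimes\varphi_1=\varphi_2\oplus\varphi_+\oplus\varphi_-$ giving $\Gamma_\C(s+1)\Gamma_\R(s)\Gamma_\R(s+1)$) matches the paper's proof and yields the stated product.
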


Next, we consider the archimedean factor of the Rankin-Selberg $L$-function $\Lambda(s,\pi_F\times\pi_G,\rho_5\otimes\rho_5)$. It follows from the construction of the standard $L$-function that the $L$-parameter of $\Pi_5^F$ is
\begin{equation}\label{degree 5 RS L factor}
\varphi_{2k_1-2}\oplus\varphi_{2k_1-4}\oplus\varphi_+.
\end{equation}
Here, we require that $k_1\geq 2$; see \cite[Table~5]{Schmidt2017}. Then by \eqref{degree 5 RS L factor} and Lemma~\ref{Tensor product lemma} we have
\begin{proposition}
Assume the notations above. The archimedean factor of $\Lambda(s,\pi_F\times\pi_G,\rho_5\otimes\rho_5)$ is given by:
\begin{equation*}
\begin{split}
&\Gamma_\C\left(s+k_1+k_2-2\right)\Gamma_\C\left(s+|k_1-k_2|\right)^2\Gamma_\C\left(s+k_1+k_2-3\right)^2\Gamma_\C\left(s+|k_2-k_1-1|\right)\Gamma_\C\left(s+k_1-1\right)\\
&\Gamma_\C\left(s+|k_1-k_2-1|\right)\Gamma_\C\left(s+k_1+k_2-4\right)\Gamma_\C\left(s+k_1-2\right)\Gamma_\C\left(s+k_2-1\right)\Gamma_\C\left(s+k_2-2\right)\Gamma_\R(s).
\end{split}
\end{equation*}
Again, by Remark~\ref{rk ell1=ell2 holds as well} we can write $\Gamma_\C\left(s+0\right)$ as  $\Gamma_\R\left(s\right)\Gamma_\R\left(s+1\right)$ if happens.
\end{proposition}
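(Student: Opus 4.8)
The plan is to repeat the computation in the proof of Proposition~\ref{gammaspintensor}, now starting from the archimedean parameters of $\Pi_5^F$ and $\Pi_5^G$. First I would assume $k_1\geq k_2$ without loss of generality. Since $\Lambda(s,\pi_F\times\pi_G,\rho_5\otimes\rho_5)=\Lambda(s,\Pi_5^F\times\Pi_5^G)$ and the archimedean $L$-parameter of $\Pi_5^F$ is $\varphi_{2k_1-2}\oplus\varphi_{2k_1-4}\oplus\varphi_+$ (and similarly for $\Pi_5^G$ with $k_2$), the Rankin--Selberg $L$-parameter at the real place is the nine-term tensor product
\[
(\varphi_{2k_1-2}\oplus\varphi_{2k_1-4}\oplus\varphi_+)\otimes(\varphi_{2k_2-2}\oplus\varphi_{2k_2-4}\oplus\varphi_+).
\]

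Next I would evaluate each of the nine summands by Lemma~\ref{Tensor product lemma}. The three products involving a $\varphi_+$ on at least one side collapse to $\varphi_{2k_1-2},\varphi_{2k_1-4},\varphi_{2k_2-2},\varphi_{2k_2-4},\varphi_+$; each of the four remaining products $\varphi_{2k_1-a}\otimes\varphi_{2k_2-b}$ with $a,b\in\{2,4\}$ splits, by the last case of Lemma~\ref{Tensor product lemma}, as $\varphi_{2k_1+2k_2-a-b}$ together with $\varphi_{|2k_1-2k_2-a+b|}$ (interpreting $\varphi_0\coloneqq\varphi_+\oplus\varphi_-$). Then I would read off the archimedean $L$-factors from the list preceding Lemma~\ref{Tensor product lemma} --- $\Gamma_\R(s)$ for $\varphi_+$, $\Gamma_\R(s+1)$ for $\varphi_-$, $\Gamma_\C(s+\ell/2)$ for $\varphi_\ell$ --- and collect identical factors. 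This should yield exactly the displayed product: the ``sum'' parts contribute $\Gamma_\C(s+k_1+k_2-2)$, two copies of $\Gamma_\C(s+k_1+k_2-3)$, and $\Gamma_\C(s+k_1+k_2-4)$; the ``difference'' parts contribute two copies of $\Gamma_\C(s+|k_1-k_2|)$ together with $\Gamma_\C(s+|k_2-k_1-1|)$ and $\Gamma_\C(s+|k_1-k_2-1|)$; the mixed $\varphi_+$-factors contribute $\Gamma_\C(s+k_1-1),\Gamma_\C(s+k_1-2),\Gamma_\C(s+k_2-1),\Gamma_\C(s+k_2-2)$; and $\varphi_+\otimes\varphi_+=\varphi_+$ contributes $\Gamma_\R(s)$. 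As a sanity check the total is twelve $\Gamma_\C$'s and one $\Gamma_\R$, i.e. $2\cdot 12+1=25=5\times 5$, as required for a $\GL(5)\times\GL(5)$ convolution.

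Finally I would check the degenerate cases, where some subscript $|k_1-k_2|$, $|k_2-k_1-1|$ or $|k_1-k_2-1|$ vanishes --- which happens precisely when $k_1=k_2$ or $k_1=k_2+1$. In each such case the relevant product $\varphi_\ell\otimes\varphi_\ell$ produces $\varphi_{2\ell}\oplus\varphi_+\oplus\varphi_-$ rather than $\varphi_{2\ell}\oplus\varphi_0$, and by Remark~\ref{rk ell1=ell2 holds as well} its archimedean factor is $\Gamma_\R(s)\Gamma_\R(s+1)=\Gamma_\C(s)=\Gamma_\C(s+0)$; so the displayed formula remains correct verbatim provided one reads $\Gamma_\C(s+0)$ with the convention of Remark~\ref{rk ell1=ell2 holds as well}. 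I expect the only genuine work to be the bookkeeping --- tracking the several $|k_1-k_2\pm 1|$ subscripts, each of which is pinned down once $k_1\geq k_2$ is fixed, and not miscounting the $\ell_1=\ell_2$ collapses --- there being no analytic ingredient beyond Lemma~\ref{Tensor product lemma} and the explicit list of $L$-factors, exactly as in the proof of Proposition~\ref{gammaspintensor}.
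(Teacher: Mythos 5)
Your proposal is correct and is exactly the argument the paper intends (the paper simply says ``a similar argument as in the proof of Proposition~\ref{gammaspintensor}''): expand the nine tensor summands of $(\varphi_{2k_1-2}\oplus\varphi_{2k_1-4}\oplus\varphi_+)\otimes(\varphi_{2k_2-2}\oplus\varphi_{2k_2-4}\oplus\varphi_+)$ via Lemma~\ref{Tensor product lemma}, read off the $\Gamma$-factors, and handle the $\ell_1=\ell_2$ degenerations via Remark~\ref{rk ell1=ell2 holds as well}; your bookkeeping reproduces the displayed product and the degree check $2\cdot 12+1=25$ is right. The only slip is cosmetic: there are five (not three) summands involving a $\varphi_+$ factor, which is consistent with the five terms you then list.
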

\end{appendix}

\bibliographystyle{alpha}
\bibliography{On_distinguishing_Siegel_cusp_forms_of_degree_two.bib}

\vspace{5ex}
\noindent Department of Mathematics, Brown University, Providence, RI 02912, USA.

\noindent E-mail address: {\tt zhining\_wei@brown.edu}

\vspace{2ex}
\noindent School of Mathematical Sciences, Xiamen University, Xiamen, Fujian 361005, China.

\noindent E-mail address: {\tt yishaoyun926@xmu.edu.cn}

\end{document}